\title{Unipotent Representations and Microlocalization}
\author{Lucas Mason-Brown}
\numberwithin{equation}{section}
\newtheorem{theorem}{Theorem}[section]
\newtheorem{theorem*}{Theorem}
\newtheorem{rmk}[theorem]{Remark}
\newtheorem{conj}[theorem]{Conjecture}
\newtheorem{prop}[theorem]{Proposition}%[chapter]
\newtheorem{definition}[theorem]{Definition}%[chapter]
\newtheorem{cor}[theorem]{Corollary}%[chapter]
\newtheorem{lemma}[theorem]{Lemma}%[chapter]
\newtheorem{lemma*}[theorem*]{Lemma}%[chapter]
\newtheorem{example}[theorem]{Example}
\newcommand{\ZZ}{\mathbb{Z}}
\newcommand{\RR}{\mathbb{R}}
\newcommand{\CC}{\mathbb{C}} 
\newcommand{\fg}{\mathfrak{g}} 
\newcommand{\fk}{\mathfrak{k}} 
\newcommand{\fh}{\mathfrak{h}} 
\newcommand{\gr}{\operatorname{gr}}
\newcommand{\Coh}{\operatorname{Coh}}
\newcommand{\QCoh}{\operatorname{QCoh}}
\newcommand{\codim}{\operatorname{codim}}
\newcommand{\AV}{\operatorname{AV}}
\newcommand{\HC}{\operatorname{HC}}
\newcommand{\cN}{\mathcal{N}}
\newcommand{\OO}{\mathbb{O}}
\newcommand{\Ind}{\mathrm{Ind}} 
\newcommand\reallywidehat[1]{%
\savestack{\tmpbox}{\stretchto{%
  \scaleto{%
    \scalerel*[\widthof{\ensuremath{#1}}]{\kern-.6pt\bigwedge\kern-.6pt}%
    {\rule[-\textheight/2]{1ex}{\textheight}}%WIDTH-LIMITED BIG WEDGE
  }{\textheight}% 
}{0.5ex}}%
\stackon[1pt]{#1}{\tmpbox}%
}
\begin{document}
\begin{abstract}
We develop a theory of microlocalization for Harish-Chandra modules, adapting a construction of Losev (\cite{Losev2011}). We explore the applications of this theory to unipotent representations of real reductive groups. For complex groups, we deduce a formula for the $K$-multiplicities of unipotent representations attached to a nilpotent orbit $\OO$, proving an old conjecture of Vogan (\cite{Vogan1991}) in a large family of cases.
\end{abstract}

\maketitle

\tableofcontents

\section{Introduction}\label{sec:intro}

Let $G_{\RR}$ be the real points of a connected reductive algebraic group. In \cite{AdamsBarbaschVogan}, Adams, Barbasch, and Vogan, following ideas of Arthur (\cite{Arthur1983},\cite{Arthur1989}), introduced a finite set of irreducible representations of $G_{\RR}$, called \emph{special unipotent representations}. We will recall their definition in Section \ref{subsec:unipotent}. These representations are conjectured to possess an array of distinguishing properties (see \cite[Chp 1]{AdamsBarbaschVogan}). For example:
\begin{itemize}
    \item They are conjectured to be unitary.
    \item They are conjectured to appear in spaces of automorphic forms.
    \item They are conjectured to \emph{generate} (through various types of induction) all irreducible unitary representations of $G$ of integral infinitesimal character
\end{itemize}
Now let $K_{\RR} \subset G_{\RR}$ be a maximal compact subgroup. Any (nice) irreducible representation $X$ of $G_{\RR}$ decomposes as a $K_{\RR}$-representation into irreducible components, each with finite multiplicity. The general philosophy of unipotent representations suggests that if $X$ is unipotent, then these multiplicities should be `small'. In \cite[Conj 12.1]{Vogan1991}, Vogan offers a conjectural description of the restriction to $K_{\RR}$ of a unipotent representation (under some additional conditions). A little more precisely, he conjectures

\begin{conj}
Let $X$ be a unipotent representation of $G_{\RR}$. Then there is a subgroup $H_{\RR} \subset K_{\RR}$ and a finite-dimensional $H_{\RR}$-representation $e$ such that
$$X \simeq_{K_{\RR}} \Ind^{K_{\RR}}_{H_{\RR}} \chi.$$
\end{conj}
Of course, the actual conjecture in \cite{Vogan1991} is much more precise, see Conjecture \ref{conj:Vogan} below. In this paper, we will prove Vogan's conjecture in a large family of cases.

The main ingredient in our proof is a functor $\Phi_{\OO}$ which `microlocalizes' Harish-Chandra modules over a nilpotent $K$-orbit $\OO$. The construction of $\Phi_{\OO}$ follows \cite[Sec 4]{Losev2011}, where a similar functor is constructed for Harish-Chandra bimodules.

\subsection{Acknowledgements} The author would like to thank David Vogan and Ivan Losev for many illuminating conversations. Special thanks to Ivan Losev for patiently explaining the details of his extension and restriction functors for Harish-Chandra bimodules.

\section{Preliminaries}\label{sec:preliminaries}

Fix $K_{\RR} \subset G_{\RR}$ as in Section \ref{sec:intro}. Write $K \subset G$ for the complexifications and $\mathfrak{k} \subset \fg$ for the (complex) Lie algebras. Let $\theta: \fg \to \fg$ denote the Cartan involution corresponding to $\mathfrak{k}$ and let $\mathfrak{p} \subset \mathfrak{g}$ be the $-1$-eigenspace of $\theta$. There is a Cartan decomposition $\mathfrak{g} = \mathfrak{k} + \mathfrak{p}$ which is orthogonal with respect to (any choice of) an invariant symmetric bilinear form on $\mathfrak{g}$.

A $(\fg,K)$-module is a left module $X$ for the universal enveloping algebra $U(\fg)$ of $\fg$ together with an algebraic $K$-action such that
\begin{enumerate}
\item The action map $U(\mathfrak{g}) \otimes X \to X$ is $K$-equivariant,
\item The $\mathfrak{k}$-action on $X$, coming from the inclusion $\mathfrak{k} \subset \mathfrak{g} \subset U(\mathfrak{g})$, coincides with the differentiated action of $K$.
\end{enumerate}
A morphism of $(\fg,K)$-modules is a $U(\fg)$-module homomorphism which intertwines the $K$-actions. Let $M(\fg,K)$ denote the (abelian) category of $(\fg,K)$-modules. A \emph{Harish-Chandra module} is a $(\fg,K)$-module which is finitely generated for $U(\fg)$. Note that an irreducible $(\fg,K)$-module is automatically Harish-Chandra. Write $\HC(\fg,K) \subset M(\fg,K)$ for the full subcategory of Harish-Chandra modules.

\subsection{Unipotent representations}\label{subsec:unipotent}

Let $G^{\vee}$ denote the Langlands dual of $G$, and write $\cN \subset \fg^*$, $\cN^{\vee} \subset (\fg^{\vee})^*$ for the nilpotent cones. The nilpotent orbits for $G$ and $G^{\vee}$ are related via \emph{Barbasch-Vogan duality} (see \cite{BarbaschVogan1985}). This is a map
$$d: \{\text{nilpotent orbits } \OO^{\vee} \subset \cN^{\vee}\} \to \{\text{nilpotent orbits } \OO \subset \cN\}.$$
A nilpotent orbit $\OO \subset \cN$ is \emph{special} if it lies in the image of $d$.

Every nilpotent $G^{\vee}$-orbit $\OO^{\vee} \subset \cN^{\vee}$ gives rise to an infinitesimal character $\lambda_{\OO^{\vee}}$ for $U(\mathfrak{g})$ as follows. If we fix a Cartan subalgebra $\fh \subset \fg$, there is a Cartan subalgebra $\fh^{\vee} \subset \fg^{\vee}$, which is canonically identified with $\fh^*$. Using a $G^{\vee}$-invariant identification $\fg^{\vee} \simeq (\fg^{\vee})^*$, we can regard $\OO^{\vee}$ as a nilpotent $G^{\vee}$-orbit in $\fg^{\vee}$. Choose an element $e^{\vee} \in \OO^{\vee}$ and an $\mathfrak{sl}(2)$-triple $(e^{\vee},f^{\vee},h^{\vee})$. Conjugating by $G^{\vee}$ if necessary, we can arrange so that $h^{\vee} \in \mathfrak{h}^{\vee} \simeq \fh^*$. Put
$$\lambda_{\OO^{\vee}} := \frac{1}{2}h^{\vee} \in \mathfrak{h}^{\vee} \simeq \mathfrak{h}^*.$$
This element is well-defined modulo the (linear) action of the Weyl group and thus determines an infinitesimal character for $U(\mathfrak{g})$ by means of the Harish-Chandra isomorphism. Let $I_{\OO^{\vee}}\subset U(\fg)$ be the unique maximal ideal with infinitesimal character $\lambda_{\OO^{\vee}}$. A \emph{special unipotent ideal} is any ideal in $U(\fg)$ which arises in this fashion.

Write $\AV(I) \subset \fg^*$ for the associated variety of a two-sided ideal $I \subset U(\fg)$. By \cite[Prop A2]{BarbaschVogan1985}, we have
$$\AV(I_{\OO^{\vee}}) = \overline{d(\OO^{\vee})}.$$
In particular, the associated variety of a special unipotent ideal is (the closure of) a special nilpotent orbit (this explains the word `special' in `special unipotent').

\begin{definition}\label{def:specialunipotent}
Suppose $\OO^{\vee} \subset \cN^{\vee}$ is a nilpotent orbit. A \emph{special unipotent representation} attached to $\OO^{\vee}$ is an irreducible $(\fg,K)$-module $X$ such that
$$\mathrm{Ann}_{U(\fg)}(X) = I_{\OO^{\vee}}.$$
\end{definition}

Special unipotent ideals belong to a larger class of maximal ideals called simply \emph{unipotent ideals}. This more general class of ideals is defined in the forthcoming paper \cite{LMBM}. If $I$ is a unipotent ideal, one can define the notion of a \emph{unipotent representation} analogously to Definition \ref{def:specialunipotent}. Our main results (Theorem \ref{thm:Vogangeom} and Corollary \ref{cor:Vogancomplex}) apply, without modification, to this more general class of representations (and the proofs are identical). Since we will not recall here the definition of this more general class of representations, the reader may choose to interpret `unipotent' to mean `special unipotent' wherever it is used below.

\subsection{Associated varieties and associated $K$-cycles}\label{subsec:associatedstuff}

Following \cite{Vogan1991}, we will associate to every Harish-Chandra module $X$ some geometric data in $\cN$. We will need the concept of a \emph{good filtration} of $X$. A filtration of $X$
$$...\subseteq X_{-1} \subseteq X_0 \subseteq X_1 \subseteq ... , \qquad \bigcap_m X_m = 0, \qquad \bigcup_m X_m = X$$
by complex subspaces is \emph{compatible} if
\begin{enumerate}
\item $U_m(\mathfrak{g})X_n \subseteq X_{m+n}$ for every $m,n \in \ZZ$.
\item $KX_m \subseteq X_m$ for every $m \in \ZZ$.
\end{enumerate}
Under these conditions, $\gr(X)$ has the structure of a graded, $K$-equivariant $S(\mathfrak{g}/\mathfrak{k})$-module. Our compatible filtration is \emph{good} if
\begin{enumerate}[resume]
\item\label{cond3} $\gr(X)$ is finitely-generated over $S(\mathfrak{g})$.
\end{enumerate}
There is an equivalence of categories (obtained by taking global sections) between the category of $K$-equivariant coherent sheaves $\Coh^K(\fg/\fk)^*$ on the affine space $(\fg/\fk)^*$ and the category of finitely-generated $K$-equivariant $S(\fg/\fk)$-modules. Thus if $X$ is equipped with a good filtration, we can (and will) regard $\gr(X)$ as an object in $\Coh^K(\fg/\fk)^*$. The following is standard (see \cite[Prop 2.2]{Vogan1991} for a proof).

\begin{prop}\label{prop:grprop}
Every $(\fg,K)$-module admits a good filtration. 
The passage from $X \in \HC(\fg,K)$ to $\gr(X) \in \Coh^K(\fg/\fk)^*$ induces a homomorphism on the Grothendieck groups
$$
KHC(\mathfrak{g},K) \to K\Coh^K(\mathfrak{g}/\mathfrak{k})^*.
$$
\end{prop}

Proposition \ref{prop:grprop} provides a recipe for attaching geometric invariants to Harish-Chandra modules. A function $\varphi: \Coh^K(\mathfrak{g}/\mathfrak{k})^* \to S$ with values in a semigroup $S$ is \emph{additive} if $\varphi(B) = \varphi(A)+\varphi(C)$ whenever there is a short exact sequence $0 \to A \to B \to C \to 0$. Under this condition, $\varphi$ is well-defined on classes in $KHC(\mathfrak{g},K)$ and therefore (by Proposition \ref{prop:grprop}), induces an (additive) function $\varphi[\gr (X)]$ on Harish-Chandra modules. 

The first example of this construction is the \emph{associated variety} $\AV(X)$ of a Harish-Chandra module $X$. Let $S$ be the set of Zariski-closed subsets of $(\mathfrak{g}/\mathfrak{k})^*$ with addition defined by $\cup$. Let $\varphi$ be the function
$$\varphi: \Coh(\mathfrak{g}/\mathfrak{k})^* \to S, \qquad \varphi(M) = \mathrm{Supp}(M) = V(\mathrm{Ann}(M)).$$
Since support is additive, $\varphi$ induces an (additive) function on $K\HC(\fg,K)$. If $X \in \HC(\fg,K)$, we write $\AV(X) \in (\fg/\fk)^*$ for $\varphi[\gr(X)]$.

The next result relates the associated variety of $X$ to the associated variety of its annihilator.
\begin{theorem}\label{thm:AV}
Suppose $X$ is an irreducible $(\fg,K)$-module. Let
$$I := \mathrm{Ann}(X) \subset U(\fg),$$
a primitive ideal in $U(\fg)$. Then the following are true:

\begin{itemize}
    \item[(i)] $\AV(I)$ is the closure of a nilpotent orbit $\OO_G \subset \cN$.
    \item[(ii)] $\OO_G \cap (\fg/\fk)^*$ is the union of finitely-many $K$-orbits,
    $$\OO_G \cap (\fg/\fk)^* = \OO_1 \cup ... \cup \OO_t,$$
    and each $\OO_i$ a Lagrangian subvariety of $\OO_G$.
    \item[(iii)] Some $\OO_i$ are contained in $\AV(X)$, and they are the maximal $K$-orbits therein.
    \item[(iv)]  If $\AV(X)$ is reducible, then 
    $$\codim(\partial \OO_G,\overline{\OO}_G) =2.$$
\end{itemize}
\end{theorem}

\begin{proof}
(i) follows from the main theorem in \cite{Joseph1985}. (ii) and (iii) are \cite[Thm 8.4]{Vogan1991}. (iv) is \cite[Thm 4.6]{Vogan1991}.
\end{proof}

In Section \ref{sec:quantloc}, we will give a simple proof of Theorem \ref{thm:AV}(iv) using the machinery of microlocalization. 

In \cite{Vogan1991}, Vogan introduces a refinement of $\AV(X)$ called the \emph{associated $K$-cycle} which carries additional information about the $K$-action on $X$. Consider the variety
$$\cN_{\fk} := \cN \cap (\fg/\fk)^*.$$
The group $K$ acts on $\cN_{\fk}$ and, by (ii) of Theorem \ref{thm:AV}, with finitely many $K$-orbits. Denote the $K$-orbits on $\cN_{\fk}$ by
$$\cN_k = \OO_1 \cup ... \cup \OO_n.$$
Note that if $X$ is a finite-length Harish-Chandra module, then by (iii) of Theorem \ref{thm:AV}
$$\AV(X) \subseteq \cN_{\fk}.$$
\begin{definition}
An \emph{associated $K$-cycle} is an $n$-tuple
$$([\mathcal{E}_1],...,[\mathcal{E}_n]) \in K\mathrm{Vec}^K(\OO_1) \times ... \times K\mathrm{Vec}^K(\OO_n)$$
subject to the following two requirements
\begin{enumerate}
\item All of the classes $[\mathcal{E}_i]$ are genuine, i.e. they are represented by objects in $\mathrm{Vec}^K(\OO_i)$.
\item The $K$-orbits corresponding to nonzero classes are mutually incomparable, i.e. none is bigger than another.
\end{enumerate}
\end{definition}
The set $S$ of of associated $K$-cycles forms a semigroup, with addition defined by
$$([\mathcal{E}_i]) + ([\mathcal{E}'_i]) = ([\mathcal{E}''_i]), \qquad [\mathcal{E}''_i]:= \begin{cases}
                               0 & \text{if $[\mathcal{E}_j], [\mathcal{E}_j'] \neq 0$ for $\OO_i \subset \overline{\OO}_j$} \\
                                    [\mathcal{E}_i] + [\mathcal{E}'_i]  & \text{else} \\
  \end{cases}$$
Now if $M \in \Coh^K(\mathcal{N}_{\fk})$, we can define an associated $K$-cycle 
\begin{equation}\label{eq:defAC}\varphi(M) = ([\varphi_1(M)],...,[\varphi_n(M)]),\end{equation}
where
$$\varphi_i(M) =
  \begin{cases}
                                   M|_{\OO_i} & \text{if $\OO_i$ is open in $\mathrm{Supp}(M)$} \\
                                   0 & \text{else} \\
  \end{cases}$$

The function $\varphi: \Coh^K(\mathcal{N}_{\fk}) \to S$ can be extended to the category $\Coh^K_{\mathcal{N}_{\fk}}(\mathfrak{g}/\mathfrak{k})^*$ of $K$-equivariant coherent sheaves set-theoretically supported in $\cN_{\fk}$ in the following manner: if $M \in \Coh^K_{\mathcal{N}_{\fk}}(\mathfrak{g}/\mathfrak{k})^*$ choose a finite filtration $0=M_t \subset M_{t-1}... \subset M_0 = M$ by $K$-equivariant subsheaves such that $M_i/M_{i+1} \in \Coh^K(\mathcal{N}_{\fk})$ for every $i$ (for example, take $M_i:= I(\mathcal{N}_{\fk})^iM$ for each $i$). Now define 
$$\varphi: \Coh^K_{\cN_{\fk}}(\mathfrak{g}/\mathfrak{k})^* \to S, \qquad \varphi(M) = \sum_{i=0}^{t-1}\varphi(M_i/M_{i+1}).$$
In \cite[Thm 2.13]{Vogan1991}, Vogan shows that $\varphi$ is well-defined and additive. If $X$ is a finite-length Harish-Chandra module, then  $[\gr(X)] \in K\Coh^K_{\cN_{\fk}}(\fg/\fk)^*$. So $\varphi$ induces an (additive) function on finite-length Harish-Chandra modules $\mathrm{AC}(X) = \varphi[\gr(X)]$, called the associated $K$-cycle.

If $X$ is a unipotent Harish-Chandra module, see Definition \ref{def:specialunipotent}, then $\mathrm{AC}(X)$ is of a very special form. Let $\OO \subset \cN_{\fk}$ be a $K$-orbit. Let $p:\widetilde{\OO} \to \OO$ be the universal $K$-equivariant cover and let $\omega_{\widetilde{\OO}}$ be the canonical bundle on $\widetilde{\OO}$ (i.e. the line bundle of top-degree differential forms).

\begin{definition}\label{def:admissibility2}
A $K$-equivariant vector bundle $\mathcal{E}$ on $\OO$ is \emph{admissible} if there is an isomorphism of $K$-equivariant vector bundles on $\widetilde{\OO}$
$$p^*\mathcal{E} \otimes p^*\mathcal{E} \simeq \underbrace{\omega_{\widetilde{\OO}} \oplus ... \oplus \omega_{\widetilde{\OO}}}_{N \text{ times}},$$
for some $N \in \ZZ_{>0}$.
\end{definition}

This condition has a very simple Lie-theoretic interpretation. If we choose a point $e \in \OO$, there is an equivalence of categories
$$\mathrm{Vec}^K(\OO) \simeq \mathrm{Rep}(K_e),$$
From left to right, the equivalence is given by restriction to the fiber over $e$. A vector bundle $\mathcal{E}$ is admissible if and only if the corresponding $K_e$-representation $\rho: K_e \to \mathrm{GL}(V)$ satisfies
\begin{equation}\label{eq:admissibility}2d\rho = \mathrm{Tr}|_{(\mathfrak{k}/\mathfrak{k}_e)^*}  \cdot \mathrm{Id}_V.\end{equation}
This is the `admissibility' condition described in \cite[Sec 7]{Vogan1991}. We note that if $0 \to \mathcal{E}_1 \to \mathcal{E}_2 \to \mathcal{E}_3 \to 0$ is a short exact sequence of $K$-equivariant vector bundles on $\OO$ and $\mathcal{E}_1$, $\mathcal{E}_3$ are admissible, then $\mathcal{E}_2$ is admissible (this is evident from (\ref{eq:admissibility})). So admissibility is a property which can be ascribed to classes in $K\mathrm{Vec}^K(\OO)$.

\begin{theorem}[Thm 8.7, \cite{Vogan1991}]\label{thm:unipotentadmissible}
Suppose $X$ is a unipotent $(\mathfrak{g},K)$-module (cf. Definition \ref{def:specialunipotent}) with associated $K$-cycle $\mathrm{AC}(X) = ([\mathcal{E}_1],...,[\mathcal{E}_n])$. Then every nonzero class $[\mathcal{E}_i]$ is admissible. 
\end{theorem}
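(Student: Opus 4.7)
The plan is to reduce admissibility to a statement about the infinitesimal character of $X$ and then extract that statement from the defining properties of a unipotent Dixmier algebra.

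First, I would translate admissibility into a fiberwise condition. By definition of $\mathrm{OD}(X)$, a nonzero class $[\mathcal{V}_i]$ corresponds to an orbit $\mathcal{O}_i$ that is open in $\AV(X)$; on such an orbit, $\gr X$ restricts to a locally free $K$-equivariant sheaf. Fix $\lambda \in \mathcal{O}_i$, let $V = (\gr X)_\lambda$, and let $\rho: K^\lambda \to GL(V)$ be the isotropy representation. Under the equivalence $\Coh^K(\mathcal{O}_i) \simeq K^\lambda\text{-rep}$, Definition \ref{def:admissibility2} unpacks to the Lie-algebra identity
$$2\, d\rho(Y) \;=\; \mathrm{Tr}(Y \,|\, (\mathfrak{k}/\mathfrak{k}^\lambda)^*) \cdot \mathrm{Id}_V, \qquad Y \in \mathfrak{k}^\lambda,$$
so $d\rho$ must factor through a scalar character of $\mathfrak{k}^\lambda$ of the prescribed value.

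Next I would invoke the unipotence hypotheses. Since the action of $U(\mathfrak{g})$ on $X$ factors through a unipotent Dixmier algebra $\mathcal{A}$, the center $Z(\mathfrak{g})$ acts on $X$, and hence on $V$, through the fixed infinitesimal character $\gamma_0$ of $\mathcal{A}$. The maximality of $\mathrm{Ann}(X)$ rigidifies this further: $V$ acquires the structure of a finite-dimensional module over the finite W-algebra $\mathcal{W}_\lambda$ quantizing the Slodowy slice through $\lambda$, on which $Z(\mathcal{W}_\lambda)$ acts via the pullback of $\gamma_0$.

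The central computation is to pin down $d\rho$ from $\gamma_0$. Fix a Jacobson--Morozov triple $(e,h,f)$ attached to $\lambda$ and decompose $\mathfrak{k}^\lambda = \mathfrak{k}^{e,h,f} \ltimes \mathfrak{n}^\lambda$ into reductive and unipotent parts. On the reductive part $\mathfrak{k}^{e,h,f}$, the action on $V$ factors through a Levi quotient of $\mathcal{W}_\lambda$, and the PBW/Duflo comparison of infinitesimal characters between $U(\mathfrak{g})$ and $\mathcal{W}_\lambda$ identifies this action, up to a $\tfrac12\mathrm{Tr}$ shift on $(\mathfrak{k}/\mathfrak{k}^\lambda)^*$, with the restriction of $\gamma_0$. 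The defining property $\gr \mathcal{A} \cong \mathbb{C}[G\cdot\lambda]$ of a unipotent Dixmier algebra is precisely what eliminates any additional correction on the $\mathcal{A}$-side, leaving exactly the desired trace character. On the unipotent radical $\mathfrak{n}^\lambda$ both sides of the admissibility identity vanish automatically (nilpotent endomorphisms are traceless, and scalar actions of a unipotent group must be trivial), so the reductive case suffices. The main obstacle is the final numerical match: verifying that the shift between the infinitesimal characters of $U(\mathfrak{g})$ and $\mathcal{W}_\lambda$ is exactly $\tfrac12\mathrm{Tr}|_{(\mathfrak{k}/\mathfrak{k}^\lambda)^*}$, with no residual correction coming from the quantization map $U(\mathfrak{g}) \to \mathcal{A}$. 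This is a delicate Duflo-type calculation, and it is where all the arithmetic behind the factor $2$ in Definition \ref{def:admissibility1} is concentrated.
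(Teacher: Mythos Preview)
The paper does not prove this theorem at all: it is stated with a citation to \cite{Vogan1991} and used as a black box. So there is no ``paper's own proof'' to compare against here; the relevant comparison is with Vogan's original argument.

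Your outline is structurally reasonable but is a plan, not a proof, and it diverges from Vogan's actual method. Vogan's 1991 argument predates finite $W$-algebras and does not pass through $\mathcal{W}_\lambda$ or any Duflo-type comparison of quantizations. Instead, he works directly with the filtered Dixmier algebra $\mathcal{A}$ and a good filtration on $X$: the condition $\gr\mathcal{A}\cong\mathbb{C}[\overline{\mathcal{O}}]$ lets him compute the action of $\mathfrak{k}^\lambda$ on the fiber $(\gr X)_\lambda$ by analyzing how $\mathfrak{k}^\lambda$ acts on $\mathcal{A}$ modulo the ideal of $\lambda$, and the half-trace appears from an explicit filtration shift (essentially a $\rho$-shift in the symbol calculus), not from a center-comparison between $U(\mathfrak{g})$ and a $W$-algebra. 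Your route through $\mathcal{W}_\lambda$ could in principle be made to work, but as written it defers exactly the hard step: you say the numerical match ``is a delicate Duflo-type calculation'' and stop. That is the entire content of the theorem. Until you actually carry out that computation (or replace it with Vogan's direct filtration argument), the proposal does not establish admissibility.

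One smaller point: you assert that on the unipotent radical $\mathfrak{n}^\lambda$ both sides vanish ``automatically.'' The left side $2\,d\rho$ need not vanish on $\mathfrak{n}^\lambda$ a priori; $V$ is a finite-dimensional $K^\lambda$-representation, so the unipotent radical of $K^\lambda$ acts unipotently, hence $d\rho$ is nilpotent there, but the admissibility identity requires $d\rho$ to be \emph{scalar}, which for a nilpotent operator forces it to be zero. You should say this explicitly rather than declare it automatic.
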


Only certain nilpotent $K$-orbits are admissible, so Theorem \ref{thm:unipotentadmissible} imposes strong additional constraints on the associated varieties of unipotent representations.

\subsection{Vogan's conjecture}\label{subsec:Vogansconjecture}

In \cite[Sec 12]{Vogan1991}, Vogan formulates a conjecture regarding the $K$-structure of unipotent Harish-Chandra modules, under some conditions. 

\begin{conj}[Conj 12.1, \cite{Vogan1991}]\label{conj:Vogan}
Let $\OO_G \subset \cN$ be a nilpotent orbit such that
$$\codim(\partial \OO_G, \overline{\OO}_G) \geq 4,$$
and suppose $X$ is a unipotent Harish-Chandra module (cf. Definition \ref{def:specialunipotent}) such that 
$$\AV(\mathrm{Ann}(X)) = \overline{\OO}_G.$$
Then by Theorem \ref{thm:AV}(iv), 
$$\AV(X) = \overline{\OO},$$
for some $K$-orbit in $\OO_G \cap (\fg/\fk)^*$ (which is a Lagrangian subvariety of $\OO_G$) and by Theorem \ref{thm:unipotentadmissible}
$$\mathrm{AC}(X) = ([\mathcal{E}]),$$
where $\mathcal{E}$ is an admissible $K$-equivariant vector bundle on $\OO$. There is an isomorphism of algebraic $K$-representations
\begin{equation}\label{eq:VoganKtypes}X \simeq_K \Gamma(\OO, \mathcal{E})\end{equation}
\end{conj}

In Section \ref{sec:cohomology}, we will show that this conjecture is true if $\mathcal{E}$ satisfies a certain cohomological condition, which we can verify in many cases.

\subsection{Localization of categories}\label{subsec:localization}

In this section, we will review some basic aspects of the theory of localization of abelian categories. The discussion here follows \cite[Chapter 4]{Popescu1973}.

Let $\mathcal{C}$ be an abelian category. A full subcategory $\mathcal{B} \subset \mathcal{C}$ is \emph{Serre} if for every short exact sequence $0 \to X \to Y \to Z \to 0$ in $\mathcal{C}$,
$$Y \in \mathcal{B} \iff X \in \mathcal{B} \ \text{and} \ Z \in \mathcal{B}.$$
In other words, $\mathcal{B}$ is a full subcategory which is closed under the formation of subobjects, quotients, and extensions. 
\begin{example}\label{ex:restriction1}
Let $X$ be a variety and $Z \subset X$ a closed subvariety. Then $\Coh_Z(X)$ is a Serre subcategory of $\Coh(X)$.
\end{example}
Given a Serre subcategory $\mathcal{B} \subset \mathcal{C}$, one can define the \emph{quotient category} $\mathcal{C}/\mathcal{B}$. The next (very standard) result describes the universal property which it satisfies.

\begin{prop}[Cor 3.11, \cite{Popescu1973}]\label{prop:quotcat}
Let $\mathcal{C}$ be an abelian category and $\mathcal{B} \subset \mathcal{C}$ a Serre subcategory. There is an abelian category $\mathcal{C}/\mathcal{B}$, unique up to equivalence, receiving an exact, essentially surjective functor $T: \mathcal{C} \to \mathcal{C}/\mathcal{B}$ with
$$\mathcal{B}=\ker{T} := \{C \in \mathcal{C}: TC=0\}$$
satisfying the following universal property: if $F: \mathcal{A} \to \mathcal{D}$ is an exact functor with $\mathcal{B} \subseteq \ker{F}$, then there is a unique exact functor $G: \mathcal{C}/\mathcal{B} \to \mathcal{D}$ such that $F = G \circ T$. 
\end{prop}

\begin{example}\label{ex:restriction2}
Let $X$ be a variety, $Z \subset X$ a closed subvariety with open complement $U$. Write $j: U \subset X$ for the inclusion. Recall, Example \ref{ex:restriction1}, that $\Coh_Z(X) \subset \Coh(X)$ is a Serre subcategory. Restriction to $U$ defines an exact essentially surjective functor $j^*:\Coh(X) \to \Coh(U)$ with kernel $\Coh_Z(X)$. It is not difficult to show that $j^*$ satisfies the universal property in Proposition \ref{prop:quotcat}. Hence, 
$\Coh(X)/\Coh_Z(X) \xrightarrow{\sim} \Coh(U)$. 
\end{example}

We are interested in Serre subcategories of a very special type. A Serre subcategory $\mathcal{B}$ is \emph{localizing} if the quotient functor $T: \mathcal{C} \to \mathcal{C}/\mathcal{B}$ admits a right-adjoint $L: \mathcal{C}/\mathcal{B}\to \mathcal{C}$. We call the composition $LT: \mathcal{C} \to \mathcal{C}$ the \emph{localization} of $\mathcal{C}$ with respect to the localizing subcategory $\mathcal{B}$. 

\begin{example}
Return to the setting of Example \ref{ex:restriction2}. First, assume that $\codim(Z,X) \geq 2$. Then $j^*:\Coh(X) \to \Coh(U)$ admits a right adjoint (and right inverse), namely $j_*: \Coh(U) \to \Coh(X)$. Hence, $\Coh_Z(X)$ is a localizing subcategory of $\Coh(X)$ and $j_*j^*: \Coh(X) \to \Coh(X)$ is the localization of $\Coh(X)$ with respect to $\Coh_Z(X)$. If, on the other hand, $\codim(Z,X)=1$, the direct image under $j$ of a coherent sheaf on $U$ is an object in $\mathrm{QCoh}(X)$, though not of $\Coh(X)$. In fact, $j^*$ in this case does not admit a right adjoint, i.e. $\Coh_Z(X)$ is not a localizing subcategory of $\Coh(X)$.
\end{example}

The following proposition catalogs the essential properties of $L$. 

\begin{prop}[Prop 4.3, \cite{Popescu1973}]\label{prop:generalpropsoflocalization}
Suppose $\mathcal{B}$ is a localizing subcategory of an abelian category $\mathcal{C}$ and that $L: \mathcal{C}/\mathcal{B}\to \mathcal{C}$ is right adjoint to the quotient functor $T: \mathcal{C} \to \mathcal{C}/\mathcal{B}$.
\begin{enumerate}
\item $L$ is left exact
\item $TL$ is naturally isomorphic to $\mathrm{id}_{\mathcal{C}/\mathcal{B}}$
\item An object $C \in \mathcal{C}$ is in the image of $L$ if and only if it has no nontrivial maps from, or extensions by, objects in $\mathcal{B}$. Symbolically, 
$$C \in \mathrm{Im}(L) \iff \mathrm{Hom}(\mathcal{B},C)=\mathrm{Ext}^1(C,\mathcal{B})=0$$
\item For every object $C \in \mathcal{C}$, the canonical morphism $C \to LT(C)$ has kernel and cokernel in $\mathcal{B}$.
\end{enumerate}
\end{prop}

We conclude this section with a useful criterion which we will use below.

\begin{prop}[Thm 4.9, \cite{Popescu1973}]\label{prop:criterionquotient}
Suppose $T:\mathcal{C} \to \mathcal{A}$ is an exact functor of abelian categories with a fully faithful right adjoint $L: \mathcal{A} \to \mathcal{C}$.  Then $T$ is a quotient functor and $\mathcal{A} \simeq \mathcal{C}/\ker{T}$.
\end{prop}

\section{Microlocalization of Harish-Chandra modules}\label{sec:microlocalization}

\subsection{The Rees construction}\label{sec:Rees}

We want to perform operations on filtered Harish-Chandra modules analogous to the restriction and extension of coherent sheaves on $\cN_{\fk}$. The first problem we encounter is that the category $\mathrm{HC}^{\mathrm{filt}}(\mathfrak{g},K)$ of well-filtered Harish-Chandra modules is not abelian. Cokernels are not well-defined. The solution is to pass to a larger abelian category containing $\mathrm{HC}^{\mathrm{filt}}(\mathrm{g},K)$.

Let $A$ be an associative algebra equipped with an increasing filtration by subspaces 
$$...\subseteq 
A_{-1} \subseteq A_0 \subseteq A_1 \subseteq ..., \qquad A_mA_n \subseteq A_{m+n}, \qquad \bigcap_m A_m=0, \qquad \bigcup_m A_m = A.$$
Form the polynomial algebra $A[\hbar,\hbar^{-1}]$ in the formal symbol $\hbar$. Define a $\mathbb{Z}$-grading by declaring $\mathrm{deg}(A) = 0$ and $\mathrm{deg}(\hbar) =1$. The \emph{Rees algebra} of $A$ is the graded subalgebra
$$R_{\hbar}A = \bigoplus_{m \in \mathbb{Z}} A_m\hbar^m \subset A[\hbar,\hbar^{-1}].$$
In a precise sense, $R_{\hbar}A$ interpolates between $A$ and $\gr(A)$. 

\begin{prop}\label{prop:reesfacts}
The subspaces $\hbar R_{\hbar}A \subset R_{\hbar}A$ and $(\hbar-1)R_{\hbar}A \subset R_{\hbar}A$ are two-sided ideals and
\begin{enumerate}
\item There is a canonical isomorphism of graded algebras 
$$R_{\hbar}A/\hbar R_{\hbar}A \simeq \gr(A).$$
\item There is a canonical isomorphism of filtered algebras
$$R_{\hbar}A/(\hbar -1)R_{\hbar}A \simeq A.$$
\end{enumerate}
\end{prop}

\begin{proof}
The ideals are two-sided since the elements $\hbar, \hbar-1$ are central.

\begin{enumerate}
\item The linear maps $A_n\hbar^n \simeq A_n \to A_n/A_{n-1}$ assemble into a surjective homomorphism of graded algebras
$$R_{\hbar}A \to \gr(A).$$
The kernel of this homomorphism is the graded subalgebra 
$$\bigoplus_nA_{n-1}\hbar^n = \bigoplus_nA_n\hbar^{n+1} = \hbar R_{\hbar}A.$$

\item The inclusions $A_n\hbar^n \simeq A_n \subseteq A$ assemble into a filtered homomorphism
$$
i: R_{\hbar}A \to A
$$
which is surjective since the filtration is exhaustive. Choose an element $a$ in the kernel of $i$

$$a = a_p\hbar^p + a_{p+1}\hbar^{p+1} + ... + a_q \hbar^q, \qquad \sum_{n=p}^q a_n = 0.$$
If we define $b_n = -a_p - a_{p+1} - ... - a_n \in A_n$ for $p \leq n \leq q$, then one easily computes
$$
(\hbar -1)\sum_{n=p}^{q-1}b_n\hbar^n = a.
$$
In particular, $a \in (\hbar - 1)R_{\hbar}A$. On the other hand, the subalgebra $(\hbar - 1)R_{\hbar}A$ is spanned by the elements $(\hbar - 1)a_n\hbar^n$ and by an easy computation $i((\hbar-1)a_n\hbar^n) =0$.
\end{enumerate}
\end{proof}

Now suppose $M$ is a module for $A$ equipped with an increasing filtration by subspaces compatible with the filtration on $A$
$$... \subseteq M_{-1} \subseteq M_0 \subseteq M_1 \subseteq ..., \qquad A_mM_n \subseteq M_{m+n}, \qquad \bigcap_m M_m =0, \qquad \bigcup_m M_m = M. $$
Form the vector space $M[\hbar,\hbar^{-1}]$ and define a $\mathbb{Z}$-grading (as above) by $\deg(M) = 0$ and $\deg(\hbar)=1$. Now $M[\hbar,\hbar^{-1}]$ is a graded module for $R_{\hbar}A$ due to the compatibility of the filtration. The \emph{Rees module} of $M$ is the graded $R_{\hbar}A$-submodule

$$M_{\hbar} = \bigoplus_{m \in \mathbb{Z}}M_m\hbar^m \subset M[\hbar,\hbar^{-1}].$$

Take $A = U(\mathfrak{g})$ with its standard filtration. The group $K$ acts on $U(\mathfrak{g})$ by filtered algebra automorphisms and therefore on its Rees algebra $R_{\hbar}U(\mathfrak{g})$ by graded algebra automorphisms. A $(\mathfrak{g}_{\hbar},K)$-module is a graded left $R_{\hbar}U(\mathfrak{g})$-module $X_{\hbar}$ equipped with a graded algebraic $K$-action satisfying the following two conditions:
\begin{enumerate}
\item The action map $R_{\hbar}U(\mathfrak{g}) \otimes X_{\hbar} \to X_{\hbar}$ is $K$-equivariant,
\item The $R_{\hbar}U(\mathfrak{g})$-action, restricted to the subspace $\mathfrak{k}\hbar \subset \mathfrak{g}\hbar \subset R_{\hbar}U(\mathfrak{g})$, coincides with $\hbar$ times the differentiated action of $K$. 
\end{enumerate}
A morphism of $(\mathfrak{g}_{\hbar},K)$-modules is a graded homomorphism of $R_{\hbar}U(\mathfrak{g})$-modules intertwining the $K$-actions. Write $M(\mathfrak{g}_{\hbar},K)$ for the abelian category of $(\mathfrak{g}_{\hbar},K)$-modules and $\mathrm{HC}(\mathfrak{g}_{\hbar},K)$ for the full subcategory of finitely-generated $(\mathfrak{g}_{\hbar},K)$-modules. 

The assignment $X \mapsto R_{\hbar}X$ defines a functor from the category $\mathrm{HC}^{\mathrm{filt}}(\mathfrak{g},K)$ of well-filtered Harish-Chandra modules to $\mathrm{HC}(\mathfrak{g}_{\hbar},K)$.

\begin{prop}
If $X \in \mathrm{HC}^{\mathrm{filt}}(\mathfrak{g},K)$ (with filtration $...\subseteq X_{-1} \subseteq X_0 \subseteq X_1 \subseteq ...$), $R_{\hbar}X$ has the structure of a $(\mathfrak{g}_{\hbar},K)$-module, finitely-generated over $R_{\hbar}U(\mathfrak{g})$. The assignment $X \mapsto R_{\hbar}X$ defines a functor
$$R_{\hbar}: \mathrm{HC}^{\mathrm{filt}}(\mathfrak{g},K) \to \mathrm{HC}(\mathfrak{g}_{\hbar},K)$$
defined on morphisms $f: X \to Y$ by $(R_{\hbar}f)(x\hbar^m) = f(x)\hbar^m$. This functor is a fully-faithful embedding. Its image is the subcategory $\mathrm{HC}^{\mathrm{tf}}(\mathfrak{g}_{\hbar},K)$ of Harish-Chandra $(\mathfrak{g}_{\hbar},K)$-modules which are $\hbar$-torsion-free.
\end{prop}

\begin{proof}
There is a functor 
$$\hbar=1: \mathrm{HC}(\mathfrak{g}_{\hbar},K) \to \mathrm{HC}^{\mathrm{filt}}(\mathfrak{g},K)$$
defined by $X_{\hbar} \mapsto X_{\hbar}/(\hbar-1)X_{\hbar}$.  The argument provided in the proof of Proposition \ref{prop:reesfacts} (replacing rings with modules) shows that $(\hbar=1) \circ R_{\hbar}$ is the identity functor on $\mathrm{HC}^{\mathrm{filt}}(\mathfrak{g},K)$. It remains to exhibit a natural isomorphism $R_{\hbar}(X_{\hbar}/(\hbar-1)X_{\hbar}) \simeq X_{\hbar}$ for every $X_{\hbar} \in \mathrm{HC}^{\mathrm{tf}}(\mathfrak{g}_{\hbar},K)$.

Fix $X_{\hbar} \in \mathrm{HC}^{\mathrm{tf}}(\mathfrak{g}_{\hbar},K)$ and write $X_{\hbar}^n$ for its $n$th graded component. For every integer $N$, define the graded subspace
$$X_{\hbar}^{\leq N} = \bigoplus_{n \leq N} X_{\hbar}^n.$$
There is a linear map
$$
\varphi_N: X_{\hbar}^{\leq N} \to X_{\hbar}^N, \qquad \varphi_N(x) = \sum_{n \leq N} x^n\hbar^{N-n}.
$$
This map is surjective, since (for example) it restricts to the identity map on $X_{\hbar}^N$. We will show that
$$\ker{\varphi_N}=(\hbar-1)X_{\hbar} \cap X_{\hbar}^{\leq N}.$$
Suppose 
$$(\hbar-1)(x^p + x^{p+1} + ... + x^q) \in (\hbar-1)X_{\hbar} \cap X_{\hbar}^{\leq N}.$$
Then $\hbar x^n - x^{n+1} = 0$ for every $n \geq N$ and consequently $x \in X_{\hbar}^{\leq N-1}$ since $X_{\hbar}$ is $\hbar$-torsion free. Then by a simple computation  $\varphi_N((\hbar-1)(x^p+...+x^q))=0$.

Conversely, suppose 
$$x = x^p + x^{p+1} + ... + x^N \in \ker{\varphi_N}.$$
Then $\sum_{n \leq N}x^n \hbar^{N-n} = 0$. For $n \leq N$, define
$$
y^n = -x^n - \hbar x^{n-1} - \hbar^2 x^{n-2} - ... \in X_{\hbar}^n.
$$
Then by a simple computation
$$x=(\hbar-1)(y^{N-1} + y^{N-2} + ...) \in (\hbar-1)X_{\hbar} \cap X_{\hbar}^{\leq N}.$$
This proves $\ker{\varphi_N}=(\hbar-1)X_{\hbar} \cap X_{\hbar}^{\leq N}$. As a result, $\varphi_N$ induces a linear isomorphism
$$
\varphi_N: (X_{\hbar}/(\hbar-1)X_{\hbar})^{\leq N} = X_{\hbar}^{\leq N}/\left((\hbar-1)X_{\hbar}\cap X_{\hbar}^{\leq N}\right) \simeq X_{\hbar}^N.
$$
We can assemble these maps into a graded isomorphism
$$
\varphi = \bigoplus_N \varphi_N: R_{\hbar}(X_{\hbar}/(\hbar-1)X_{\hbar}) \simeq X_{\hbar}.
$$
It is clear from its construction that $\varphi$ is a $R_{\hbar}U(\mathfrak{g})$-module homomorphism and is compatible with the $K$-actions. 
\end{proof}

Besides $R_{\hbar}$, there are several other functors relating the categories $\mathrm{HC}^{\mathrm{filt}}(\mathfrak{g},K)$, $\mathrm{HC}(\mathfrak{g}_{\hbar},K)$ and $\Coh^{K \times \CC^{\times}}(\mathfrak{g}/\mathfrak{k})^*$. By Proposition \ref{prop:reesfacts} applied to $A=U(\mathfrak{g})$ there are natural isomorphisms
$$R_{\hbar}U(\mathfrak{g})/\hbar R_{\hbar}U(\mathfrak{g}) \simeq S(\mathfrak{g}), \qquad R_{\hbar}U(\mathfrak{g})/(\hbar-1) R_{\hbar}U(\mathfrak{g}) \simeq U(\mathfrak{g}).$$
Every $M \in \Coh^{K \times \CC^{\times}}(\mathfrak{g}/\mathfrak{k})^*$ can be regarded as a finitely-generated $(\mathfrak{g}_{\hbar},K)$-module via the quotient map $R_{\hbar}U(\mathfrak{g}) \to S(\mathfrak{g})$. On the other hand, if $X_{\hbar} \in \mathrm{HC}^{\mathrm{filt}}(\mathfrak{g}_{\hbar},K)$, then $X_{\hbar}/\hbar X_{\hbar}$ has the structure of a graded, $K$-equivariant, coherent sheaf on $(\mathfrak{g}/\mathfrak{k})^*$ and $X_{\hbar}/(\hbar-1)X_{\hbar}$ has the structure of a well-filtered Harish-Chandra module. These operations define functors, which are related as described in the following proposition. 

\begin{prop}\label{prop:functors}
The functors
\begin{align*}
i: \Coh^{K \times \CC^{\times}}(\mathfrak{g}/\mathfrak{k})^* &\to \mathrm{HC}(\mathfrak{g}_{\hbar},K)\\
M &\mapsto M\\
\hbar=0: \mathrm{HC}(\mathfrak{g}_{\hbar},K) &\to \Coh^K(\mathfrak{g}/\mathfrak{k})^*\\
X_{\hbar} &\mapsto X_{\hbar}/\hbar X_{\hbar}\\
\hbar=1: \mathrm{HC}(\mathfrak{g}_{\hbar},K) &\to \mathrm{HC}^{\mathrm{filt}}(\mathfrak{g},K)\\
X_{\hbar} &\mapsto X_{\hbar}/(\hbar-1)X_{\hbar}\\
\gr: \mathrm{HC}^{\mathrm{filt}}(\mathfrak{g},K) &\to \Coh^{K \times \CC^{\times}}(\mathfrak{g}/\mathfrak{k})^*\\
X &\mapsto \gr(X)
\end{align*}
satisfy the relations
\begin{enumerate}
\item $(\hbar=0) \circ i = \mathrm{id}$
\item $(\hbar=1) \circ R_{\hbar}=\mathrm{id}$
\item $(\hbar=0) \circ R_{\hbar} = \gr$
\end{enumerate}
\end{prop}

\begin{proof}
The first relation is obvious. The second and third follow from Proposition \ref{prop:reesfacts} (replacing $A$ with $X$).
\end{proof}

The situation is summarized in the following commutative diagram.

\begin{center}
\begin{tikzcd}[column sep = large, row sep = large]
& \mathrm{HC}(\mathfrak{g}_{\hbar},K) \arrow[dl, "\hbar=0", shift left=.5ex] \arrow[r, hookrightarrow] \arrow[d,"\hbar=1", shift left=.5ex]& M(\mathfrak{g}_{\hbar},K)\\
\Coh^{K \times \CC^{\times}}(\mathfrak{g}/\mathfrak{k})^* \arrow[ur, hookrightarrow, "i", shift left=.5ex] & \mathrm{HC}^{\mathrm{filt}}(\mathfrak{g},K) \arrow[u,hookrightarrow, "R_{\hbar}", shift left=.5ex] \arrow[l, "\gr"] \arrow[d, "\text{forget}"] & \\
& \mathrm{HC}(\mathfrak{g},K) \arrow[r,hookrightarrow] & M(\mathfrak{g},K)
\end{tikzcd}
\end{center}

\subsection{Construction of $\Phi_{\OO}$}\label{sec:quantloc}

Choose $e \in \cN_{\fk}$ and let $\OO = K \cdot e \subset \cN_{\fk}$. Recall that if $X_{\hbar} \in M(\mathfrak{g}_{\hbar},K)$, then $X_{\hbar}/\hbar X_{\hbar} \in \mathrm{QCoh}^{K \times \CC^{\times}}(\fg/\fk)^*$. Define the \emph{associated variety} of $X_{\hbar}$ to be the subset
$$\AV(X_{\hbar}) := \mathrm{Supp}(X_{\hbar}/\hbar X_{\hbar}) \subset (\fg/\fk)^*.$$
If $Z$ is a subset of $(\mathfrak{g}/\mathfrak{k})^*$, we can define full subcategories $\QCoh_Z^{K,\mathbb{C}^{\times}}(\mathfrak{g}/\mathfrak{k})^*$, $\Coh_Z^{K,\mathbb{C}^{\times}}(\mathfrak{g}/\mathfrak{k})^*$, $M_Z(\mathfrak{g}_{\hbar},K)$, $\mathrm{HC}_Z(\mathfrak{g}_{\hbar},K)$, $\mathrm{HC}_Z^{\mathrm{filt}}(\mathfrak{g},K)$, and $\mathrm{HC}_Z(\mathfrak{g},K)$ by considering all objects with set-theoretic support (or associated variety) contained in $Z$. We begin with a simple observation.

\begin{prop}\label{prop:Serresubcat}
The subcategories 
\begin{align*}
\QCoh^{K \times \CC^{\times}}_{\partial \OO}(\mathfrak{g}/\mathfrak{k})^* &\subset \QCoh^{K \times \CC^{\times}}_{\overline{\OO}}(\mathfrak{g}/\mathfrak{k})^*\\
\Coh^{K \times \CC^{\times}}_{\partial \OO}(\mathfrak{g}/\mathfrak{k})^* &\subset \Coh^{K \times \CC^{\times}}_{\overline{\OO}}(\mathfrak{g}/\mathfrak{k})^*\\
\mathrm{HC}_{\partial \OO}(\mathfrak{g},K) &\subset \mathrm{HC}_{\overline{\OO}}(\mathfrak{g},K)\\
\mathrm{HC}_{\partial \OO}(\mathfrak{g}_{\hbar},K) &\subset \mathrm{HC}_{\overline{\OO}}(\mathfrak{g}_{\hbar},K)
\end{align*}
are Serre.
\end{prop}

\begin{proof}
The first two subcategories are Serre by the additivity of support. The third subcategory is Serre by the additivity of the associated variety (see the remarks preceding Theorem \ref{thm:AV}). For the fourth subcategory, we argue as follows. Suppose $A$ is filtered algebra with an algebraic $K$-action such that $\gr(A)$ is a finitely-generated commutative algebra. One can define Harish-Chandra $(A,K)$-modules and good filtrations on them as in Section \ref{subsec:associatedstuff}. Similarly to Proposition \ref{prop:grprop}, every Harish-Chandra $(A,K)$-module admits a good filtration and there is a group homomorphism
$$K\mathrm{HC}(A,K) \to K\Coh^K(\mathrm{Spec}(\gr(A)))$$
The algebra $R_{\hbar}U(\mathfrak{g})$ has two natural $K$-invariant filtrations. One is the filtration defined by the grading. The second is defined by
$$(R_{\hbar}U(\mathfrak{g}))_m = U_m(\mathfrak{g})[\hbar] \cap R_{\hbar}U(\mathfrak{g}) = \hbar^m U_m(\mathfrak{g})[\hbar]$$
If we equip $R_{\hbar}U(\fg)$ with the latter filtration, there is a natural identification $\gr R_{\hbar} U(\fg) \simeq S(\mathfrak{g})[\hbar]$. So if $X_{\hbar} \in \mathrm{HC}(\mathfrak{g}_{\hbar},K)$, then we can find a good filtration on $X_{\hbar}$ (compatible with our chosen filtration on $R_{\hbar}U(\fg)$)  and $[\gr X_{\hbar}] \in K\Coh^K((\mathfrak{g}/\fk)^* \times \mathrm{Spec}(\CC[\hbar]))$. It is easy to see that 
$$\AV(X_{\hbar}) = \mathrm{Supp}(\gr X_{\hbar}/\hbar \gr X_{\hbar}).$$
The proof is very easy and we omit it. Now it follows that $\mathrm{HC}_{\partial \OO}(\mathfrak{g}_{\hbar},K) \subset \mathrm{HC}_{\overline{\OO}}(\mathfrak{g}_{\hbar},K)$ is Serre. 
\end{proof}

\begin{rmk}
Note that the subcategory $M_{\partial \OO}(\mathfrak{g}_{\hbar},K) \subset M_{\overline{\OO}}(\mathfrak{g}_{\hbar},K)$ is not Serre. It is closed under quotients and extensions, but not under subobjects. Take, for instance, $\mathfrak{g} = \mathbb{C}$. Then $R_{\hbar}U(\mathfrak{g}) = \mathbb{C}[x,\hbar]$. Let $M= \mathbb{C}(\hbar)$, the field of rational functions in $\hbar$. $M$ is an (infinitely-generated) $\mathbb{C}[x,\hbar]$-module with $x$ acting by $0$ and $M/\hbar M=0$ (since $\mathbb{C}(\hbar)$ is a field). Hence, $\mathrm{Supp}(M) = \emptyset$. Yet the submodule $L=\mathbb{C}[\hbar]$ has $L/\hbar L=\mathbb{C}$ and is therefore supported at a point.
\end{rmk}

Our goal in this section is to prove (under a codimension condition on $\partial \OO$) that $\mathrm{HC}_{\partial \OO}(\mathfrak{g}_{\hbar},K)$ is a localizing subcategory of $\mathrm{HC}_{\overline{\OO}}(\mathfrak{g}_{\hbar},K)$ and to construct the corresponding localization functor
$$\Phi_{\OO}: \mathrm{HC}_{\overline{\OO}}(\mathfrak{g}_{\hbar},K) \to \mathrm{HC}_{\overline{\OO}}(\mathfrak{g}_{\hbar},K),$$
see Section \ref{subsec:localization}. We will see that $\Phi_{\OO}$ descends to a functor
$$\overline{\Phi}_{\OO}: \mathrm{HC}_{\overline{\OO}}(\mathfrak{g},K) \to \mathrm{HC}_{\overline{\OO}}(\mathfrak{g},K)$$
which inherits all of the essential properties of $\Phi_{\OO}$.

Our construction of $\Phi_{\OO}$ is adapted from Losev, who constructs analogous functors in \cite[Sec 3]{Losev2011} for Harish-Chandra bimodules. Most of the proofs in this section are due essentially to Losev, although some arguments have been modified to accommodate our more general setting.

Fix an element $e \in \OO$. If we fix an invariant form on $\mathfrak{g}$, $e$ is identified with a nilpotent element in $\mathfrak{p}$, which we continue to denote by $e$. Choose an $\mathfrak{sl}(2)$ triple $(e,f,h) \in \mathfrak{p} \times \mathfrak{p} \times \mathfrak{k}$. The centralizer $L := K^{e,f,h}$ is a maximal reductive subgroup of $K^e$.

Define the maximal ideal $I_{e} \subset R_{\hbar}U(\mathfrak{g})$ to be the preimage under the canonical surjection $R_{\hbar}U(\mathfrak{g}) \to S(\mathfrak{g})$ of the maximal ideal defining $e$. Then consider the completion of $R_{\hbar}U(\mathfrak{g})$ with respect to $I_{e}$:
$$\widehat{R_{\hbar}U(\mathfrak{g})} := \varprojlim R_{\hbar}U(\mathfrak{g})/I_{e}^n.$$
There is a canonical surjection $\widehat{R_{\hbar}U(\mathfrak{g})} \to R_{\hbar}U(\mathfrak{g})/I_{e}$. Since $R_{\hbar}U(\mathfrak{g})/I_{e}$ is a field, the kernel $\widehat{I_{e}} \subset \widehat{R_{\hbar}U(\mathfrak{g})}$ is the unique maximal (left, right, and two-sided) ideal in $\widehat{R_{\hbar}U(\mathfrak{g})}$. 

The basic properties of the algebra $\widehat{R_{\hbar}U(\mathfrak{g})}$ and its finitely-generated modules are summarized in \cite{Losev2011}. Here are a few:

\begin{prop}[Lem 2.4.4, \cite{Losev2011}]\label{prop:completionfacts}
The following are true:
\begin{enumerate}
\item $\widehat{R_{\hbar}U(\mathfrak{g})}$ is Noetherian
\item $\widehat{R_{\hbar}U(\mathfrak{g})}$ is separated and complete in the $\widehat{I_{e}}$-adic topology, i.e. 
$$\bigcap_n \widehat{I_{e}}^n\widehat{R_{\hbar}U(\mathfrak{g})}=0, \qquad \widehat{R_{\hbar}U(\mathfrak{g})} \simeq \varprojlim \widehat{R_{\hbar}U(\mathfrak{g})}/\widehat{I_{e}}^n$$
\item If $\widehat{X}_{\hbar}$ is a finitely-generated $\widehat{R_{\hbar}U(\mathfrak{g})}$-module, $\widehat{X}_{\hbar}$ is separated and complete in the $\widehat{I_{e}}$-adic topology, i.e.
$$\bigcap_n \widehat{I_{e}}^n\widehat{X}_{\hbar}=0, \qquad \widehat{X}_{\hbar} \simeq \varprojlim \widehat{X}_{\hbar}/\widehat{I_{e}}^n\widehat{X}_{\hbar}$$
\end{enumerate}
\end{prop}

If a group (or Lie algebra) acts on $R_{\hbar}U(\mathfrak{g})$ and preserves $I_{e}$, then it acts naturally on the completion $\widehat{R_{\hbar}U(\mathfrak{g})}$. There are two reasonable group actions with this property:

\begin{enumerate}
\item \textbf{The adjoint action of} $L$. Since $L$ preserves $e \in \mathfrak{g}^*$, it preserves the maximal ideal defining it and therefore its preimage $I_{e} \subset R_{\hbar}U(\mathfrak{g})$. Consequently, it lifts to an action on $\widehat{R_{\hbar}U(\mathfrak{g})}$. In fact, the entire centralizer $K^{e}$ acts in this fashion, but for reasons that will soon become apparent we will not consider the action of the unipotent radical.

\item \textbf{The Kazhdan action of} $\mathbb{C}^{\times}$. The element $h \in \mathfrak{k}$ determines a unique co-character $\gamma: \mathbb{C}^{\times} \to K$ with $d\gamma_1(1) = h$. We get an algebraic action of $\mathbb{C}^{\times}$ on $U(\mathfrak{g})$ by composing $\gamma$ with $\mathrm{Ad}$:
$$t \cdot X_1...X_m = \mathrm{Ad}(\gamma(t))(X_1)...\mathrm{Ad}(\gamma(t))(X_m)$$
Finally, we extend this action to the polynomial algebra $U(\mathfrak{g})[\hbar]$ by defining $t \cdot \hbar = t^2\hbar$. This action obviously preserves the subalgebra $R_{\hbar}U(\mathfrak{g}) \subset U(\mathfrak{g})[\hbar]$. 

$\mathbb{C}^{\times}$ also acts on $\mathfrak{g}^*$ by $t \cdot \zeta = t^{-2} \mathrm{Ad}^*(\gamma(t))(\zeta)$. This induces a $\mathbb{C}^{\times}$-action on $S(\mathfrak{g}) = \mathbb{C}[\mathfrak{g}^*]$ characterized by $t \cdot X = t^2 \mathrm{Ad}(\gamma(t))(X)$. These actions (of $\mathbb{C}^{\times}$ on $R_{\hbar}U(\mathfrak{g}), S(\mathfrak{g})$, and $\mathfrak{g}^*$) are what Losev calls in \cite{Losev2011} the `Kazhdan actions' of $\mathbb{C}^{\times}$. The canonical map $R_{\hbar}U(\mathfrak{g}) \to S(\mathfrak{g})$ is equivariant with respect to the Kazhdan actions on $R_{\hbar}U(\mathfrak{g})$ and $S(\mathfrak{g})$. The definitions are rigged so that $e$ is fixed by $\mathbb{C}^{\times}$:
\begin{align*}
t \cdot e &= t^{-2} \gamma(t) \cdot e = t^{-2} e(\gamma(t)^{-1} \cdot) = t^{-2} (e, \gamma(t)^{-1}\cdot)\\
&= t^{-2}(\gamma(t)\cdot e, \cdot) = t^{-2}(t^2e,\cdot) = (e,\cdot) = e
\end{align*}
Hence, the Kazhdan action preserves the ideal defining $e$ and therefore its preimage $I_{e} \subset R_{\hbar}U(\mathfrak{g})$. Consequently, it lifts to an action on $\widehat{R_{\hbar}U(\mathfrak{g})}$.
\end{enumerate}

Two comments on these definitions are in order. First, since $L$ centralizes $\gamma(\mathbb{C}^{\times})$, these two actions commute. This is not the case if we consider the full action of $K^{e}$ and this is the principal reason why we restrict our attention to $L$. Second, neither action is algebraic (i.e. locally finite), except in the most trivial situations. However, both actions can be differentiated (to the Lie algebras $\mathfrak{l}$ and $\mathbb{C}$, respectively) since they are lifted from algebraic actions on $R_{\hbar}U(\mathfrak{g})$.

Suppose $X_{\hbar} \in M(\mathfrak{g}_{\hbar},K)$. Form the completion of $X_{\hbar}$ with respect to $I_{e}$:
$$\widehat{X}_{\hbar} := \varprojlim X_{\hbar}/I_{e}^nX_{\hbar}.$$
The space $\widehat{X}_{\hbar}$ has lots of interesting structure. For one, it is obviously a module for $\widehat{R_{\hbar}U(\mathfrak{g})}$. The $L$-action on $X_{\hbar}$ lifts to an action on $\widehat{X}_{\hbar}$, since $L$ preserves $I_{e}$. The naive $\mathbb{C}^{\times}$-action on $X_{\hbar}$ (obtained from the grading) \emph{does not} lift to the completion (since $I_{e}X_{\hbar}$ is not usually graded). But as with $R_{\hbar}U(\mathfrak{g})$ we can define a slightly modified action (called the \emph{Kazhdan action} on $X_{\hbar}$) by
$$t \cdot x = t^{2n}\gamma(t)x, \qquad n = \mathrm{deg}(n),$$
and this action \emph{does} lift to the completion. Therefore, $\widehat{X}_{\hbar}$ has the structure of a $\widehat{R_{\hbar}U(\mathfrak{g})}$-module with actions of $L$ and $\mathbb{C}^{\times}$. Once again, these actions are \emph{not} algebraic. But they \emph{do} differentiate to the Lie algebras. The axioms for $M(\mathfrak{g}_{\hbar},K)$ impose various compatibility conditions on these three algebraic structures.

\begin{prop}\label{prop:structureofcompletion}
If $X_{\hbar} \in M(\mathfrak{g}_{\hbar},K)$, then $\widehat{X}_{\hbar}$ has the structure of a $\widehat{R_{\hbar}U(\mathfrak{g})}$-module with actions of $L$ and $\mathbb{C}^{\times}$ satisfying the following properties

\begin{enumerate}
\item\label{structureofcompletion1} The $L$ and $\mathbb{C}^{\times}$-actions commute
\item\label{structureofcompletion2} The action map $\widehat{R_{\hbar}U(\mathfrak{g})} \otimes \widehat{X}_{\hbar} \to \widehat{X}_{\hbar}$ is both $L$ and $\mathbb{C}^{\times}$-equivariant
\item\label{structureofcompletion3} The $\widehat{R_{\hbar}U(\mathfrak{g})}$-action, restricted to the subspace $\mathfrak{l}\hbar \subset \mathfrak{g}\hbar \subset R_{\hbar}U(\mathfrak{g}) \subset \widehat{R_{\hbar}U(\mathfrak{g})}$ coincides with $\hbar$ times the differentiated action of $L$.
\end{enumerate}
\end{prop}

A $(\widehat{g}_{\hbar},L)$-module is a left $\widehat{R_{\hbar}U(\mathfrak{g})}$-module with $L$ and $\mathbb{C}^{\times}$-actions satisfying the conditions of Proposition \ref{prop:structureofcompletion}. A morphism of $(\widehat{g}_{\hbar},L)$-modules is a $L$ and $\mathbb{C}^{\times}$ equivariant $\widehat{R_{\hbar}U(\mathfrak{g})}$-module homomorphism. Write $M(\widehat{g}_{\hbar},L)$ for the abelian category of $(\widehat{g}_{\hbar},L)$-modules (with morphisms defined as above) and $\mathrm{HC}(\widehat{g}_{\hbar},L)$ for the full subcategory of $(\widehat{g}_{\hbar},L)$-modules finitely-generated over $\widehat{R_{\hbar}U(\mathfrak{g})}$. Completion defines a functor $M(\mathfrak{g}_{\hbar},K) \to M(\widehat{g}_{\hbar},L)$. Its restriction to the subcategory $\mathrm{HC}(\mathfrak{g}_{\hbar},K)$ is exact.

\begin{prop}[Prop 2.4.1, \cite{Losev2011}]\label{prop:completionexact}
If $X_{\hbar} \in \mathrm{HC}(\mathfrak{g}_{\hbar},K)$, the natural map
$$
\widehat{R_{\hbar}U(\mathfrak{g})} \otimes_{R_{\hbar}U(\mathfrak{g})} X_{\hbar} \to \widehat{X}_{\hbar}$$
is an isomorphism. In particular, $\widehat{X}_{\hbar}$ is a finitely-generated $\widehat{R_{\hbar}U(\mathfrak{g})}$-module. In other words, the completion functor restricts
$$\widehat{\bullet}: \mathrm{HC}(\mathfrak{g}_{\hbar},K) \to \mathrm{HC}(\widehat{\mathfrak{g}}_{\hbar},L).$$
This functor is exact. 
\end{prop}

\begin{cor}\label{cor:completionfacts}
Suppose $X_{\hbar} \in \mathrm{HC}(\mathfrak{g}_{\hbar},K)$. Then
\begin{enumerate}
\item there is a natural isomorphism
$$\widehat{X}_{\hbar}/\hbar \widehat{X}_{\hbar} \simeq \widehat{X_{\hbar}/\hbar X_{\hbar}}.$$
\item $\widehat{X}_{\hbar}=0$ if and only if $e \notin \mathrm{Supp}(X_{\hbar})$.
\end{enumerate}
\end{cor}

\begin{proof}
\begin{enumerate}
\item 
Let $X_{\hbar} \in \mathrm{HC}(\mathfrak{g}_{\hbar},K)$. Multiplication by $\hbar$ defines a short exact sequence in $\mathrm{HC}(\mathfrak{g}_{\hbar},K)$
$$X_{\hbar} \overset{\hbar}{\to} X_{\hbar} \to X_{\hbar}/\hbar X_{\hbar} \to 0.$$
Since completion is exact, we get a short exact sequence in $\mathrm{HC}(\widehat{\mathfrak{g}}_{\hbar},L)$
$$\widehat{X}_{\hbar} \overset{\hbar}{\to} \widehat{X}_{\hbar} \to \widehat{X_{\hbar}/\hbar X_{\hbar}} \to 0,$$
which gives rise to the desired isomorphism. 

\item Use the description of (commutative) completion provided in part (1) of Proposition \ref{prop:4completions} to observe that that $\widehat{X_{\hbar}/\hbar X_{\hbar}} = 0$ if and only if $e \notin \mathrm{Supp}(X_{\hbar}/\hbar X_{\hbar}) =: \mathrm{Supp}(X_{\hbar})$. From the previous part, $\widehat{X_{\hbar}/\hbar X_{\hbar}} = 0$ if and only if $\widehat{X}_{\hbar} = \hbar \widehat{X}_{\hbar}$. From Proposition \ref{prop:completionexact}, $\widehat{X}_{\hbar}$ is a finitely-generated $\widehat{R_{\hbar}U(\mathfrak{g})}$-module and therefore, from Proposition \ref{prop:completionfacts}, separated in the $\widehat{I}_{e}$-adic topology. In particular (since $\hbar \in \widehat{I}_{e}$)
$$\bigcap_n \hbar^n \widehat{X}_{\hbar} =0.$$
Consequently, $\widehat{X}_{\hbar} = \hbar \widehat{X}_{\hbar}$ if and only if $\widehat{X}_{\hbar} = 0$. Putting all of these implications together, we deduce the result.
\end{enumerate}
\end{proof}

For every $\widehat{X}_{\hbar} \in M(\widehat{\mathfrak{g}}_{\hbar},L)$, we will define a special subspace $\Gamma \widehat{X}_{\hbar}$ of $\widehat{X}_{\hbar}$ (actually $\Gamma X_{\hbar}$ is not, strictly speaking, a subspace when $K$ is disconnected. We will address this difficulty in a moment). 

As usual, denote the identity component of $K$ by $K^0$. Let $K^1 = LK^0$. Note that $K^1$ is a subgroup of $K$ with Lie algebra $\mathfrak{k}$ and the component group of $L/(L \cap K^0)$. The construction of $\Gamma \widehat{X}_{\hbar}$ proceeds in stages:

\begin{enumerate}
\item  First, take the subspace $\Gamma^0\widehat{X}_{\hbar}$ of $K^0$-finite vectors. More precisely, define
\begin{align*}
\Gamma^0\widehat{X}_{\hbar} := \{&x \in \widehat{X}_{\hbar}: x \ \text{belongs to a finite-dimensional } \mathfrak{k}\hbar-\text{invariant}\\
&\text{subspace which integrates to a representation of } K^0\}
\end{align*}
Since $K^0$ is connected, $\Gamma^0\widehat{X}_{\hbar}$ has a well-defined algebraic $K^0$-action. It is also an $R_{\hbar}U(\mathfrak{g})$ submodule of $\widehat{X}_{\hbar}$ and the $K^0$-action is compatible with the module structure in the two usual ways. Since $\mathfrak{k}\hbar$ is stable under the $L$ and Kazhdan $\mathbb{C}^{\times}$-actions on $R_{\hbar}U(\mathfrak{g})$, $\Gamma^0\widehat{X}_{\hbar}$ is stable under the $L$ and Kazhdan $\mathbb{C}^{\times}$-actions on $\widehat{X}_{\hbar}$. The $L$-action on $\Gamma^0\widehat{X}_{\hbar}$ is locally-finite---its differential coincides with the locally finite action of $\mathfrak{k}\hbar$. This presents an interesting complication. The module $\Gamma^0\widehat{X}_{\hbar}$ has two (in general, distinct) algebraic actions of $L \cap K^0$. One comes from the $K^0$-action built into the definition of $\Gamma^0 \widehat{X}_{\hbar}$. The other comes from the $L$-action on $\widehat{X}_{\hbar}$. These two actions of $L \cap K^0$ differentiate to the same action of $\mathfrak{l} = \mathrm{Lie}(L \cap K^0)$ and therefore agree on the identity component of $L \cap K^0$.

\item Next, form the subspace of $\Gamma^0\widehat{X}_{\hbar}$ consisting of vectors on which the two $L \cap K^0$-actions coincide. 
$$\Gamma^1\widehat{X}_{\hbar} := \{x \in \Gamma^0\widehat{X}_{\hbar}: l \cdot_1 x = l \cdot_2 x, \quad \forall l \in L \cap K^0\}.$$
This subspace is an $R_{\hbar}U(\mathfrak{g})$-submodule of $\Gamma^0\widehat{X}_{\hbar}$ and is stable under $\mathbb{C}^{\times}$. It has algebraic actions of $L$ and $K^0$ which agree on the intersection and therefore an algebraic action of $K^1=LK^0$. 

\item  Take the $\mathbb{C}^{\times}$-finite vectors in $\Gamma^1\widehat{X}_{\hbar}$.
\begin{align*}
\Gamma^1_{\mathrm{lf}}\widehat{X}_{\hbar} := \{&x \in \Gamma^1\widehat{X}_{\hbar}: x \ \text{belongs to a finite-dimensional }\\
&\mathbb{C}^{\times}-\text{invariant subspace}\}.
\end{align*}
This subspace has the structure of a $R_{\hbar}U(\mathfrak{g})$-module with algebraic actions of $K^1$ and $\mathbb{C}^{\times}$. The $K^1$-action is compatible with the module structure in the two usual ways. The $\mathbb{C}^{\times}$-action is compatible with the module structure in the sense that the action map $R_{\hbar}U(\mathfrak{g}) \otimes \Gamma^1_{\mathrm{lf}}\widehat{X}_{\hbar} \to \Gamma^1_{\mathrm{lf}}\widehat{X}_{\hbar}$ is $\mathbb{C}^{\times}$-equivariant. Note that the actions of $K^1$ and $\mathbb{C}^{\times}$ do not, in general, commute. The actions of $L$ and $\mathbb{C}^{\times}$ obviously do, but the actions of $K^0$ and $\mathbb{C}^{\times}$ do not. We can fix this by composing the existing $\mathbb{C}^{\times}$-action with $\gamma(t)^{-1}$ (in effect, undoing the `Kazhdanification' required to make the original $\mathbb{C}^{\times}$-action lift to the completion). The result is a grading on $\Gamma^1_{\mathrm{lf}}\widehat{X}_{\hbar}$ which is manifestly even. Halve it, to obtain a grading which is compatible (under the natural map $X_{\hbar} \to \widehat{X}_{\hbar}$) with the original grading on $X_{\hbar}$. With this new grading, $\Gamma^1_{\mathrm{lf}}$ has the structure of a graded, $K^1$-equivariant $R_{\hbar}U(\mathfrak{g})$-module (with the standard grading on $R_{\hbar}U(\mathfrak{g})$). 

\item The final step is to perform a finite induction
$$\Gamma \widehat{X}_{\hbar} := \mathrm{Ind}^K_{K^1} \Gamma^1_{\mathrm{lf}}\widehat{X}_{\hbar}.$$
If we identify $\Gamma \widehat{X}_{\hbar}$ with functions 
$$\{f: K \to \Gamma^1_{\mathrm{lf}} \widehat{X}_{\hbar}: f(k'k) = k' \cdot f(k) \ \text{for } k' \in K^1,k \in K\}$$
there is an $R_{\hbar}U(\mathfrak{g})$-module structure on $\Gamma \widehat{X}_{\hbar}$ defined by
$$(Yf)(k) = (k\cdot Y)f(k), \qquad Y \in R_{\hbar}U(\mathfrak{g}), k \in K, f \in \Gamma\widehat{X}_{\hbar}$$
and an algebraic $\mathbb{C}^{\times}$-action defined by
$$(t\cdot f)(k) = t \cdot f(k), \qquad t \in \mathbb{C}^{\times}, k \in K, f \in \Gamma\widehat{X}_{\hbar}$$
Summarizing, $\Gamma \widehat{X}_{\hbar}$ has the structure of a $R_{\hbar}U(\mathfrak{g})$-module with algebraic $K$ and $\mathbb{C}^{\times}$-actions. It is easy to check that these three structures satisfy the defining properties of a $(\mathfrak{g}_{\hbar},K)$-module.
\end{enumerate}
 
Since all of the ingredients used to define $\Gamma \widehat{X}_{\hbar}$ ($K^0$-finite vectors, $\Gamma^1$, $\mathbb{C}^{\times}$-finite vectors, induction) are functorial, $\Gamma$ gives rise to a functor $M(\widehat{\mathfrak{g}}_{\hbar},L) \to M(\mathfrak{g}_{\hbar},K)$. Define
$$\Phi_{e}:= \Gamma \circ \widehat{\bullet}: M(\mathfrak{g}_{\hbar},K) \to M(\widehat{\mathfrak{g}}_{\hbar},K).$$
Clearly, $\Phi_{e}$ is left exact: it is the composition of a completion functor (exact, by Proposition \ref{prop:completionexact}), $K^0$-finite vectors (left exact), $\Gamma^1$ (left exact), $\mathbb{C}^{\times}$-finite vectors (left exact), and finite induction (exact). In Section \ref{sec:cohomology}, we will study its right derived functors. We will need the following elementary result.

\begin{prop}\label{prop:enoughinjectives}
The category $M(\mathfrak{g}_{\hbar},K)$ has enough injectives.
\end{prop}

\begin{proof}
This follows from an easy general fact: suppose $\mathcal{A}$ and $\mathcal{B}$ are abelian categories and $(L:\mathcal{A}\to \mathcal{B}, R: \mathcal{B} \to \mathcal{A})$ is an adjunction. Suppose that $L$ is exact and that the natural map $A \to RLA$ is an injection for every $A \in \mathcal{A}$. Then if $\mathcal{B}$ has enough injectives, so does $\mathcal{A}$.

If $\mathcal{A} = M(\mathfrak{g}_{\hbar},K)$ and $\mathcal{B} = R_{\hbar}U(\mathfrak{g})-\mathrm{mod}$, the forgetful functor $L: \mathcal{A} \to \mathcal{B}$ has a right adjoint $R: \mathcal{B} \to \mathcal{A}$ defined in much the same way as $\Gamma$. If $B \in \mathcal{B}$, the subspace
\begin{align*}
R^0(B) = \{&b \in B: b \ \text{belongs to a finite-dimensional } \mathfrak{k}\hbar - \text{invariant subspace} \\
&\text{which integrates to a representation of } \ K^0\}
\end{align*}
has the structure of a $K^0$-equivariant $R_{\hbar}U(\mathfrak{g})$-module and
$$R^1(B) = \mathrm{Ind}^K_{K^0}R^0B$$
has the structure of a $K$-equivariant $R_{\hbar}U(\mathfrak{g})$-module. We need to force a grading on $R^1B$ (compatible with the $K$-action and the module structure in all of the usual ways). Define
$$R(B) = \bigoplus_{n \in \mathbb{Z}} R^1(B),$$
putting one copy of $R^1B$ in every integer degree. Give $R(B)$ the structure of a $(\mathfrak{g}_{\hbar},K)$-module by defining
$$Y(b_n) = (Yb)_{m+n}, \quad Y \in R_{\hbar}U(\mathfrak{g})_m \qquad 
k(b_n) = (kb)_n, \quad  k \in K \qquad
t(b_n) = (t^nb)_n, \quad t \in \mathbb{C}^{\times}.$$
It is easy to check that $\mathcal{A},\mathcal{B},L$, and $R$ satisfy the conditions listed above. It is well known that $R-\mathrm{mod}$ has enough injectives for any ring $R$. Hence, $\mathcal{A}$ has enough injectives by the general fact above.
\end{proof}

For the remainder of this section, we will enforce the assumption
\begin{equation}\label{eq:codim2}\codim(\partial \OO,\overline{\OO}) \geq 2.\end{equation}
Let $j: \OO \subset \overline{\OO}$ be the inclusion.

\begin{prop}\label{prop:coherencepreserved}
Recall the containments
$$\Coh^{K \times \CC^{\times}}(\overline{\OO}) \subset \Coh^{K \times \CC^{\times}}_{\overline{\OO}}(\mathfrak{g}/\mathfrak{k})^* \subset \mathrm{HC}_{\overline{\OO}}(\mathfrak{g}_{\hbar},K) \subset M(\mathfrak{g}_{\hbar},K)$$
from Proposition \ref{prop:functors}. The functor $\Phi_{e}$ preserves all three subcategories of $M(\mathfrak{g}_{\hbar},K)$. Its restriction to $\Coh^{K \times \CC^{\times}}(\overline{\OO})$ coincides with the functor
$$j_*j^*: \Coh^{K \times \CC^{\times}}(\overline{\OO}) \to \Coh^{K \times \CC^{\times}}(\overline{\OO})$$
\end{prop}

\begin{proof}
Suppose $M \in \Coh^{K, \mathbb{C}^{\times}}(\overline{\OO})$. By the definition of $\Phi_{e}$ and Proposition \ref{prop:geomsignificance}, it is clear that $\Phi_{e}M$ coincides with the $\mathbb{C}^{\times}$-finite part of $j_*j^*M$. But the $\mathbb{C}^{\times}$-action on $j_*j^*M$ is already finite, so $\Phi_{e}M = j_*j^*M$. By Proposition \ref{thm:finitegeneration} and the codimension condition on $\OO$, this is an object in $\Coh^{K \times \CC^{\times}}(\overline{\OO})$. 

Now suppose $M \in \Coh^{K, \mathbb{C}^{\times}}_{\overline{\OO}}(\mathfrak{g}/\mathfrak{k})^*$. $M$ admits a finite filtration by $K$ and $\mathbb{C}^{\times}$-equivariant subsheaves
$$0 = M_0 \subset M_1 \subset ... \subset M_t = M, \qquad N_i :=M_i/M_{i-1} \in \Coh^{K, \mathbb{C}^{\times}}(\overline{\OO}) \ \text{for} \ 1 \leq i \leq t$$
We have $\Phi_{e}M_1 \in \Coh^{K \times \CC^{\times}}_{\overline{\OO}}(\mathfrak{g}/\mathfrak{k})^*$ by the previous paragraph. Suppose $\Phi_{e}M_i \in \Coh^{K \times \CC^{\times}}_{\overline{\OO}}(\mathfrak{g}/\mathfrak{k})^*$ for some $i <t$. There is a short exact sequence
$$0 \to M_{i} \to M_{i+1} \to N_{i+1} \to 0$$
in $\Coh^{K \times \CC^{\times}}_{\overline{\OO}}(\mathfrak{g}/\mathfrak{k})^*$. By the left exactness of $\Phi_{e}$, there is a long exact sequence in $\QCoh^{K \times \CC^{\times}}_{\overline{\OO}}(\mathfrak{g}/\mathfrak{k})^*$
$$0 \to \Phi_{e}M_i \to \Phi_{e}M_{i+1} \to \Phi_{e}N_{i+1} \to ... $$
Note that $\Phi_{e}M_i$ is coherent by hypothesis and $\Phi_{e}N_{i+1}$ is coherent since $N_{i+1} \in \Coh^{K \times \CC^{\times}}(\overline{\OO})$. Hence, $\Phi_{e}M_{i+1}$ is coherent, since it fits within an exact sequence between coherent sheaves. By induction on $i$, $\Phi_{e}M \in \Coh^{K \times \CC^{\times}}_{\overline{\OO}}(\mathfrak{g}/\mathfrak{k})^*$.

Finally, suppose $X_{\hbar} \in \mathrm{HC}(\mathfrak{g}_{\hbar},K)$. Define $M = X_{\hbar}/\hbar X_{\hbar} \in \Coh^{K, \mathbb{C}^{\times}}_{\overline{\OO}}(\mathfrak{g}/\mathfrak{k})^*$. There is a short exact sequence
$$0 \to \hbar X_{\hbar} \to X_{\hbar} \to M \to 0$$
in $\mathrm{HC}_{\overline{\OO}}(\mathfrak{g}_{\hbar},K)$. By the left exactness of $\Phi_{e}$, there is a long exact sequence
$$0 \to \Phi_{e}\hbar X_{\hbar} \to \Phi_{e}X_{\hbar} \to \Phi_{e} \to ... $$
in $M_{\overline{\OO}}(\mathfrak{g}_{\hbar},K)$ and hence an inclusion
$$\Phi_{e}X_{\hbar}/\Phi_{e}\hbar X_{\hbar} \subseteq \Phi_{e}M.$$
It is clear from the construction of $\Phi_{e}$ that $\hbar\Phi_{e}X_{\hbar} = \Phi_{e}\hbar X_{\hbar}$. So we obtain from above
$$\Phi_{e}X_{\hbar}/\hbar \Phi_{e}X_{\hbar} \subseteq \Phi_{e}M.$$
The left hand side is coherent since the right hand side is coherent. Choose a finite set of generators $x_1,...,x_n$ for $\Phi_{e}X_{\hbar}/\hbar \Phi_{e}X_{\hbar}$ over $S(\mathfrak{g}/\mathfrak{k})$. Choose arbitrary lifts $\widetilde{x}_1,...,\widetilde{x}_n$ to $\Phi_{e}X_{\hbar}$ and form the $(\mathfrak{g}_{\hbar},K)$-submodule $R \subset \Phi_{e}$ generated by these elements. By definition, $\Phi_{e}X_{\hbar} = R + \hbar \Phi_{e}X_{\hbar}$. If we replace $X_{\hbar}$ with $\hbar X_{\hbar}$ and repeat the same argument, we obtain $\hbar \Phi_{e}X_{\hbar} = \hbar R + \hbar^2 \Phi_{e}X_{\hbar}$, and hence $\Phi_{e}X_{\hbar} = R + \hbar^2\Phi_{e}X_{\hbar}$. Then, $\Phi_{e}X_{\hbar} = R + \hbar^n\Phi_{e}X_{\hbar}$ by a simple induction on $n$.

Since $R$ is finitely-generated over a nonnegatively graded ring, its grading is bounded from below. Choose an integer $N$ such that $R_n = 0$ for every $n < N$. If $n<N$ and $x \in (\Phi_{e}X_{\hbar})_n$, then $x \in \bigcap_n \hbar^n\Phi_{e}X_{\hbar}$. Since $\widehat{X}_{\hbar}$ is separated in the $\widehat{I}_{e}$-adic topology (part (3) of Proposition \ref{prop:completionfacts}),
$$\bigcap_n \hbar^n \widehat{X}_{\hbar} \subseteq \bigcap_n \widehat{I}_{e}^n\widehat{X}_{\hbar} = 0$$
Then it is clear from the construction of $\Gamma$ that 
$$\bigcap_n \hbar^n \Phi_{e} X_{\hbar} = 0.$$
So, $x=0$ and we see that the grading on $\Phi_{e}X_{\hbar}$ is (also) bounded from below. Now suppose $n$ is arbitrary and $y \in (\Phi_{e}X_{\hbar})_n$. Choose $m$ so large that $(\hbar^m\Phi_{e}X_{\hbar})_n=0$. Then $\Phi_{e}X_{\hbar} = R + \hbar^m \Phi_{e}X_{\hbar}$ implies $y \in R$. This proves that $\Phi_{e}X_{\hbar} = R$. 

Now, $\Phi_{e}X_{\hbar}$ is a finitely-generated $(\mathfrak{g}_{\hbar},K)$-module and hence an object in $\mathrm{HC}(\mathfrak{g}_{\hbar},K)$. From the inclusion $\Phi_{e}X_{\hbar}/\hbar \Phi_{e}X_{\hbar}\subseteq \Phi_{e}M$ and the additivity of support, we have $\Phi_{e}X_{\hbar} \in \mathrm{HC}_{\overline{\OO}}(\mathfrak{g}_{\hbar},K)$. 
\end{proof}

Write $\mathcal{A}_{\overline{\OO}}$ for the full image of the completion functor $\widehat{\bullet}: \mathrm{HC}_{\overline{\OO}}(\mathfrak{g}_{\hbar},K) \to \mathrm{HC}(\widehat{\mathfrak{g}}_{\hbar},L)$. 

\begin{prop}\label{prop:adjunction}
The functors
$$\widehat{\bullet}: \mathrm{HC}_{\overline{\OO}}(\mathfrak{g}_{\hbar},K) \to \mathcal{A}_{\overline{\OO}}, \qquad 
\Gamma:\mathcal{A}_{\overline{\OO}} \to \mathrm{HC}_{\overline{\OO}}(\mathfrak{g}_{\hbar},K)$$
are left and right adjoints.
\end{prop}

\begin{proof}
Both functors factor through the intermediate category  $\mathrm{HC}_{\overline{\OO}}(\mathfrak{g}_{\hbar},K^1)$
\begin{center}
\begin{tikzcd}
\mathrm{HC}(\mathfrak{g}_{\hbar},K) \arrow[r, "\mathrm{Res}",shift left=.5ex] & \mathrm{HC}(\mathfrak{g}_{\hbar},K^1) \arrow[r,"\widehat{\bullet}"
,shift left=.5ex] \arrow[l,"\mathrm{Ind}",shift left=.5ex] & \mathcal{A}_{\overline{\OO}} \arrow[l,"\Gamma^1_{\mathrm{lf}}",shift left=.5ex]
\end{tikzcd}
\end{center}
The functor $\mathrm{Ind}^K_{K^1}: \mathrm{HC}_{\overline{\OO}}(\mathfrak{g}_{\hbar},K^1) \to \mathrm{HC}_{\overline{\OO}}(\mathfrak{g}_{\hbar},K)$, as we have defined it, is left-adjoint to $\mathrm{Res}$. There is an alternative definition of $\mathrm{Ind}^K_{K^1}$ via tensor products and co-invariants (rather than functions and invariants), given by
$$\mathrm{Ind}^K_{K^1} V = \mathbb{C}[K] \otimes_{\mathbb{C}[K^1]} V$$
and this second version of induction is right-adjoint to $\mathrm{Res}$. Since $K^1$ has finite index in $K$, these two versions coincide. Thus, it suffices to exhibit an adjunction between the functors $\widehat{\bullet}: \HC(\fg_{\hbar},K^1) \to \mathcal{A}_{\overline{\OO}}$ and $\Gamma^1_{\mathrm{lf}}:\mathcal{A}_{\overline{\OO}} \to \HC(\fg_{\hbar},K^1)$. 

Choose $X \in \mathrm{HC}_{\overline{\OO}}(\mathfrak{g}_{\hbar},K^1)$ and $Y \in \mathcal{A}_{\overline{\OO}}$. We want to define a natural bijection
$$\mathrm{Hom}_{\mathfrak{g}_{\hbar},K^1,\mathbb{C}^{\times}}(X,\Gamma^1_{\mathrm{lf}}Y) \simeq \mathrm{Hom}_{\widehat{\mathfrak{g}}_{\hbar},L,\mathbb{C}^{\times}}(\widehat{X},Y)$$
Suppose $f \in \mathrm{Hom}_{\mathfrak{g}_{\hbar},K^1,\mathbb{C}^{\times}}(X,\Gamma^1_{\mathrm{lf}}Y)$. Compose $f$ with the inclusion $i:\Gamma^1_{\mathrm{lf}}Y \subset Y$ to obtain an $L$ and $\mathbb{C}^{\times}$-equivariant $R_{\hbar}U(\mathfrak{g})$-module homomorphism $i\circ f: X \to Y$. Since $Y$ is complete in the $\widehat{I}_{e}$-adic topology (part (3) of Proposition \ref{prop:completionfacts}), this homomorphism extends to a (unique) morphism in $\mathrm{HC}(\widehat{\mathfrak{g}}_{\hbar},L)$ 
$$\widehat{i\circ f}: \widehat{X} \to Y.$$
On the other hand, if $g \in \mathrm{Hom}_{\widehat{\mathfrak{g}}_{\hbar},L,\mathbb{C}^{\times}}(\widehat{X},Y)$, the restriction $g|_X$ takes values in $\Gamma^1_{\mathrm{lf}}Y$. One easily checks that the assignments $f \mapsto \widehat{i \circ f}$ and $g \mapsto g|_X$ define mutually inverse bijections. 
\end{proof}

The following proposition establishes some of the corresponding properties of $\Phi_{e}$. The statements and proofs are analagous to \cite[Prop 3.4.1]{Losev2011}.

\begin{prop}\label{prop:allpropsoflocfunctor}
The functor
$$\Phi_{e}: \mathrm{HC}_{\overline{\OO}}(\mathfrak{g}_{\hbar},K) \to \mathrm{HC}_{\overline{\OO}}(\mathfrak{g}_{\hbar},K)$$
which is well-defined by Proposition \ref{prop:coherencepreserved}, has the following properties:
\begin{enumerate}
\item For every $X_{\hbar} \in \mathrm{HC}_{\overline{\OO}}(\mathfrak{g}_{\hbar},K)$, there is a natural map
$$X_{\hbar} \to \Phi_{e}X_{\hbar},$$
and its completion
$$\widehat{X}_{\hbar} \to \widehat{\Phi_{e}X_{\hbar}}$$
is an injection.

\item $\ker{\Phi_{e}} = \mathrm{HC}_{\partial \OO}(\mathfrak{g}_{\hbar},K)$.

\item For every $X_{\hbar} \in \mathrm{HC}_{\overline{\OO}}(\mathfrak{g}_{\hbar},K)$,
$$\mathrm{Ann}(X_{\hbar}) \subseteq \mathrm{Ann}(\Phi_{e}X_{\hbar}).$$

\item Form the right derived functors $R^i\Phi_{e}: M(\mathfrak{g}_{\hbar},K) \to M(\mathfrak{g}_{\hbar},K)$ using Proposition \ref{prop:enoughinjectives}. Then if $X_{\hbar} \in \mathrm{HC}_{\overline{\OO}}(\mathfrak{g}_{\hbar},K)$, the gradings on $R^i\Phi_{e}X_{\hbar}$ are bounded from below. 
\end{enumerate}
\end{prop}

\begin{proof}
\begin{enumerate}
\item This is a formal consequence of the adjunction $(\widehat{\bullet},\Gamma)$ established in Proposition \ref{prop:adjunction}. The natural map $X_{\hbar} \to \Phi_{e}X_{\hbar}$ is the morphism in $\mathrm{Hom}_{\mathfrak{g}_{\hbar},K,\mathbb{C}^{\times}}(X_{\hbar},\Phi_{e}X_{\hbar})$ corresponding to the identity map $\mathrm{id} \in \mathrm{Hom}_{\widehat{\mathfrak{g}}_{\hbar},L,\mathbb{C}^{\times}}(\widehat{X}_{\hbar},\widehat{X}_{\hbar})$. Its completion is a morphism
$$\widehat{X}_{\hbar} \to \widehat{\Phi_{e}X_{\hbar}}$$
in $\mathrm{HC}(\widehat{\mathfrak{g}}_{\hbar},L)$. On the other hand, the identity map $\mathrm{id} \in \mathrm{Hom}_{\mathfrak{g}_{\hbar},K,\mathbb{C}^{\times}}(\Phi_{e}X_{\hbar},\Phi_{e}X_{\hbar})$ corresponds to a natural map $\widehat{\Phi_{e}X_{\hbar}} \to \widehat{X}_{\hbar}$ in $\mathrm{HC}(\widehat{\mathfrak{g}}_{\hbar},L)$. Since all maps are natural, the composition 
$$\widehat{X}_{\hbar} \to \widehat{\Phi_{e}X_{\hbar}} \to \widehat{X}_{\hbar}$$
is the identity. In particular, $\widehat{X}_{\hbar} \to \widehat{\Phi_{e}X_{\hbar}}$ is an injection. 

\item If $X_{\hbar} \in \mathrm{HC}_{\partial \OO}(\mathfrak{g}_{\hbar},K)$, then $\Phi_{e}X_{\hbar} = 0$ by (one half of) part (2) of Proposition \ref{cor:completionfacts}. Conversely, if $X_{\hbar} \in \mathrm{HC}_{\overline{\OO}}(\mathfrak{g}_{\hbar},K)$ and $\Phi_{e}X_{\hbar} =0$, then $\widehat{\Phi_{e}X_{\hbar}} = 0$ and hence $\widehat{X}_{\hbar} = 0$ by the result of the previous part. Then $X_{\hbar} \in \mathrm{HC}_{\partial \OO}(\mathfrak{g}_{\hbar},K)$ by (the other half of) part (2) of Proposition \ref{cor:completionfacts}.

\item Since $\widehat{X}_{\hbar}$ is an inverse limit of quotients $X_{\hbar}/I_{e}^nX_{\hbar}$ --- each annihilated by $\mathrm{Ann}(X_{\hbar})$---there is an obvious inclusion $\mathrm{Ann}(X_{\hbar}) \subseteq \mathrm{Ann}(\widehat{X}_{\hbar})$. And since $\Gamma^1_{\mathrm{lf}}\widehat{X}_{\hbar} \subseteq \widehat{X}_{\hbar}$, we have $\mathrm{Ann}(\widehat{X}_{\hbar}) \subseteq \mathrm{Ann}(\Gamma^1_{\mathrm{lf}}\widehat{X}_{\hbar})$. Examining the formula for the $R_{\hbar}U(\mathfrak{g})$-action on $\mathrm{Ind}^K_{K^1}\Gamma^1_{\mathrm{lf}}\widehat{X_{\hbar}}$, it is clear that $\mathrm{Ann}(\Gamma \widehat{X}_{\hbar})$ is the largest $K$-invariant subspace of $\mathrm{Ann}(\Gamma^1_{\mathrm{lf}}\widehat{X}_{\hbar})$. But $\mathrm{Ann}(X_{\hbar})$ is already $K$-invariant, so $\mathrm{Ann}(X_{\hbar}) \subseteq \mathrm{Ann}(\Phi_{e} X_{\hbar})$.
\item Consider the abelian category $M^{\geq 0}(\mathfrak{g}_{\hbar},K)$ of $(\mathfrak{g}_{\hbar},K)$-modules with nonnegative gradings. In the proof of Proposition \ref{prop:enoughinjectives}, we defined a functor $R: R_{\hbar}U(\mathfrak{g})-\mathrm{mod} \to M(\mathfrak{g}_{\hbar},K)$ right adjoint to the forgetful functor. $R$ was defined by
$$R(B) = \bigoplus_{n \in \mathbb{Z}}R^1(B), \quad B \in R_{\hbar}U(\mathfrak{g})-\mathrm{mod},$$
for $R^1B$ a certain $K$-equivariant $R_{\hbar}U(\mathfrak{g})$-module produced canonically from $B$. We could have defined
$$R(B) = \bigoplus_{n \geq 0}R^1(B), \quad B \in R_{\hbar}U(\mathfrak{g})-\mathrm{mod}$$
This is still a $(\mathfrak{g}_{\hbar},K)$-module since $R_{\hbar}U(\mathfrak{g})$ is nonnegatively graded, and the resulting functor $R: R_{\hbar}U(\mathfrak{g})-\mathrm{mod} \to M^{\geq 0}(\mathfrak{g}_{\hbar},K)$ is right adjoint to the corresponding forgetul functor. Then the general fact cited in the proof of Proposition \ref{prop:adjunction} implies enough injectives in $M^{\geq 0}(\mathfrak{g}_{\hbar},K)$. 

Now, consider the category $M^b(\mathfrak{g}_{\hbar},K)$ of $(\mathfrak{g}_{\hbar},K)$-modules with gradings bounded from below. If $X_{\hbar} \in M^b(\mathfrak{g}_{\hbar},K)$, we can shift the grading on $X_{\hbar}$ by an appropriate integer $N$ to obtain an object $X_{\hbar}^N \in M^{\geq 0}(\mathfrak{g}_{\hbar},K)$. $X_{\hbar}^N$ has an injective covering $X_{\hbar}^N \hookrightarrow I$ in $M^{\geq 0}(\mathfrak{g},K)$ by the result of the previous paragraph. Since the shift $I^{-N} \in M^b(\mathfrak{g}_{\hbar},K)$ remains injective, $X_{\hbar} \hookrightarrow I^{-N}$ is an injective covering of $X_{\hbar}$. Hence, $M^b(\mathfrak{g}_{\hbar},K)$ has enough injectives as well.

Let $X_{\hbar} \in \mathrm{HC}_{\overline{\OO}}(\mathfrak{g}_{\hbar},K)$. $X_{\hbar}$ is finitely-generated over a nonnegatively-graded ring and, therefore, an object of $M^b(\mathfrak{g}_{\hbar},K)$. Choose an injective resolution $0 \to X_{\hbar} \to I^{\bullet}$ in $M^b(\mathfrak{g}_{\hbar},K)$. The result follows from the standard construction of $R^i\Phi_{e}X_{\hbar}$. 

\end{enumerate}
\end{proof}

From these properties, we deduce

\begin{prop}\label{prop:phiislocalization}
$\Phi_{e}$ is a localization functor for the subcategory $\mathrm{HC}_{\partial \OO}(\mathfrak{g}_{\hbar},K) \subset \mathrm{HC}_{\overline{\OO}}(\mathfrak{g}_{\hbar},K)$.
\end{prop}

\begin{proof}
We will apply the general criterion of Proposition \ref{prop:criterionquotient}. We proved in Proposition \ref{prop:adjunction} that the functors
\begin{center}
\begin{tikzcd}
\mathrm{HC}_{\overline{\OO}}(\mathfrak{g}_{\hbar},K) \arrow[r, shift left=.5ex,"\widehat{\bullet}"] & \mathcal{A}_{\overline{\OO}} \arrow[l, shift left=.5ex,"\Gamma"]
\end{tikzcd}
\end{center}
form an adjoint pair and in part $(2)$ of Proposition \ref{prop:allpropsoflocfunctor} that $\ker{\Phi_{e}} = \mathrm{HC}_{\partial \OO}(\mathfrak{g}_{\hbar},K)$. It remains to show that $\Gamma: \mathcal{A}_{\overline{\OO}} \to \mathrm{HC}_{\overline{\OO}}(\mathfrak{g}_{\hbar},K)$ is fully faithful. 

Choose objects $\widehat{X}_{\hbar},\widehat{Y}_{\hbar} \in \mathcal{A}_{\overline{\OO}}$. Suppose $f \in \mathrm{Hom}_{\mathfrak{g}_{\hbar},K,\mathbb{C}^{\times}}(\Gamma \widehat{X}_{\hbar},\Gamma \widehat{Y}_{\hbar})$. Compose $f$ with the natural map $\Gamma \widehat{Y}_{\hbar} \to Y_{\hbar} \to \widehat{Y}_{\hbar}$ to obtain an $L$ and $\mathbb{C}^{\times}$-equivariant $R_{\hbar}U(\mathfrak{g})$-module homomorphism $\Gamma \widehat{X}_{\hbar} \to \widehat{Y}_{\hbar}$. Since $\widehat{Y}_{\hbar}$ is complete in the $\widehat{I}_{e}$-adic topology (Proposition \ref{prop:completionfacts}), this homomorphism extends to a unique morphism $\widetilde{f} \in \mathrm{Hom}_{\widehat{\mathfrak{g}}_{\hbar}, L,\mathbb{C}^{\times}}(\widehat{\Phi_{e}X_{\hbar}},\widehat{Y}_{\hbar})$ making the following diagram commute
\begin{center}
\begin{tikzcd}
\Gamma \widehat{X}_{\hbar} \arrow[r,"f"] \arrow[d] & \Gamma \widehat{Y}_{\hbar} \arrow[d] \\
\widehat{\Phi_{e}X_{\hbar}} \arrow[r,dashrightarrow,"\exists ! \widetilde{f}"] & \widehat{Y}_{\hbar}
\end{tikzcd}
\end{center}
The restriction $\widetilde{f}|_{\widehat{X}_{\hbar}}$ is a morphism in $\mathrm{Hom}_{\widehat{g}_{\hbar},L,\mathbb{C}^{\times}}(\widehat{X}_{\hbar},\widehat{Y}_{\hbar})$ and 
the correspondence $f \mapsto \widetilde{f}|_{\widehat{X}_{\hbar}}$ defines a map $\mathrm{Hom}(\Gamma \widehat{X}_{\hbar}, \Gamma \widehat{Y}_{\hbar}) \to \mathrm{Hom}(\widehat{X}_{\hbar},\widehat{Y}_{\hbar})$ which is manifestly inverse to $\Gamma$. 
\end{proof}

From Proposition \ref{prop:phiislocalization} and the general properties of localization (Proposition \ref{prop:generalpropsoflocalization}), we get a number of additional properties more or less for free:

\begin{cor}
$\Phi_{e}$ has the following additional properties

\begin{enumerate}
\item For every $X_{\hbar} \in \mathrm{HC}_{\overline{\OO}}(\mathfrak{g}_{\hbar},K)$, the kernel and cokernel of the natural map
$$X_{\hbar} \to \Phi_{e} X_{\hbar}$$
are objects in $\mathrm{HC}_{\partial \OO}(\mathfrak{g}_{\hbar},K)$
\item $X_{\hbar} \in \mathrm{Im}\Phi_{e}$ if and only if
$$\mathrm{Hom}(\mathrm{HC}_{\partial \OO}(\mathfrak{g}_{\hbar},K),X_{\hbar})=\mathrm{Ext}^1(X_{\hbar},\mathrm{HC}_{\partial \OO}(\mathfrak{g}_{\hbar},K))=0$$

\item $\widehat{\bullet} \circ \Gamma: \mathcal{A}_{\overline{\OO}} \to \mathcal{A}_{\overline{\OO}}$ is the identity functor. In particular, the injection $\widehat{X}_{\hbar} \hookrightarrow \widehat{\Phi_{e}X_{\hbar}}$ from Proposition \ref{prop:allpropsoflocfunctor}(1) is actually an isomorphism.
\end{enumerate}
\end{cor}

If we choose a different representative $e' \in \OO$, we get a different functor $\Phi_{e'}:\mathrm{HC}_{\overline{\OO}}(\mathfrak{g}_{\hbar},K) \to \mathrm{HC}_{\overline{\OO}}(\mathfrak{g}_{\hbar},K)$. This functor enjoys all of the properties enumerated above. In particular, it is a localization functor for the subcategory $\mathrm{HC}_{\partial \OO}(\mathfrak{g}_{\hbar},K)$. But localization functors are defined by a universal property and are therefore unique up to natural isomorphism. This proves 

\begin{prop}\label{prop:welldefined}
If $e,e' \in \OO$, there is a natural isomorphism
$$\Phi_{e} \simeq \Phi_{e'}$$
We can therefore write $\Phi_{\OO}$ without ambiguity. 
\end{prop}

Recall the embedding
$$R_{\hbar}:\mathrm{HC}^{\mathrm{filt}}_{\overline{\OO}}(\mathfrak{g},K) \hookrightarrow \mathrm{HC}_{\overline{\OO}}(\mathfrak{g}_{\hbar},K)$$
from Proposition \ref{prop:functors}. Setting $\hbar=1$ defines a right inverse to $R_{\hbar}$
$$\hbar=1: \mathrm{HC}_{\overline{\OO}}(\mathfrak{g}_{\hbar},K) \to \mathrm{HC}^{\mathrm{filt}}_{\overline{\OO}}(\mathfrak{g},K),$$
which restricts to an equivalence on the subcategory $\mathrm{HC}^{\mathrm{tf}}_{\overline{\OO}}(\mathfrak{g}_{\hbar},K)$ of $\hbar$-torsion free $(\mathfrak{g}_{\hbar},K)$-modules. The condition of being $\hbar$-torsion free means that
$$0 \to X_{\hbar} \overset{\hbar^n}{\to} X_{\hbar} \quad \text{is exact} \ \forall n \in \mathbb{N}.$$
Since $\Phi_{\OO}$ is left-exact, this condition is preserved under application of $\Phi_{\OO}$. So $\Phi_{\OO}$ preserves the subcategory $\mathrm{HC}^{\mathrm{tf}}_{\overline{\OO}}(\mathfrak{g},K)$. The next proposition is analagous to the discussion at the beginning of \cite[Sec 3.4]{Losev2011}. 

\begin{prop}\label{prop:descenttoHC}
$\Phi_{\OO}$ descends to a well-defined functor on $\mathrm{HC}_{\overline{\OO}}(\mathfrak{g},K)$. More precisely, there is a unique functor
$$\overline{\Phi}_{\OO}: \mathrm{HC}_{\overline{\OO}}(\mathfrak{g},K) \to \mathrm{HC}_{\overline{\OO}}(\mathfrak{g},K)$$
making the following diagram commute
\begin{center}
\begin{tikzcd}
\mathrm{HC}^{\mathrm{tf}}_{\overline{\OO}}(\mathfrak{g}_{\hbar},K) \arrow[r,"\Phi_{\OO}"] \arrow[d, "\sim","\hbar=1"] & \mathrm{HC}^{\mathrm{tf}}_{\overline{\OO}}(\mathfrak{g}_{\hbar},K) \arrow[d,"\sim","\hbar=1"] \\
\mathrm{HC}^{\mathrm{filt}}_{\overline{\OO}} (\mathfrak{g},K) \arrow[d,"\mathrm{forget}"] \arrow[u,"R_{\hbar}",shift left=1ex] & \mathrm{HC}^{\mathrm{filt}}_{\overline{\OO}} (\mathfrak{g},K) \arrow[d,"\mathrm{forget}"] \arrow[u,"R_{\hbar}",shift left=1ex]\\
\mathrm{HC}_{\overline{\OO}}(\mathfrak{g},K) \arrow[r,"\overline{\Phi}_{\OO}"] &\mathrm{HC}_{\overline{\OO}}(\mathfrak{g},K) 
\end{tikzcd}
\end{center}
\end{prop}

\begin{proof}
First, we will describe how we would \emph{like} to define $\overline{\Phi}_{\OO}$. Then we will prove that this definition makes sense. Define the functor
$$P= \mathrm{forget} \circ (\hbar=1) \circ \Phi_{\OO} \circ R_{\hbar}: \mathrm{HC}^{\mathrm{filt}}_{\overline{\OO}}(\mathfrak{g},K) \to \mathrm{HC}_{\overline{\OO}}(\mathfrak{g},K)$$
For an object $X \in \mathrm{HC}_{\overline{\OO}}(\mathfrak{g},K)$, we would like to define
\begin{equation}\label{eqn:objects}
\overline{\Phi}_{\OO}X = P(X,\mathcal{F}) 
\end{equation}
for any choice of good filtration $\mathcal{F}$. For a morphism $f: X \to Y$ in $\mathrm{HC}_{\overline{\OO}}(\mathfrak{g},K)$ we would like to define
\begin{equation}\label{eqn:morphisms}
\overline{\Phi}_{\OO}f: P(f: (X,\mathcal{F}) \to (Y,\mathcal{G}))
\end{equation}
for any choice of good filtrations $\mathcal{F}$ on $X$ and $\mathcal{G}$ on $Y$ compatible with $f$. There are several things to prove. 

\textbf{Objects.} If $\mathcal{F}$ is a good filtration on $X$ and $s$ is an integer, write $\mathcal{F}^s$ for the filtration defined by $\mathcal{F}^s_iX=\mathcal{F}_{s+i}X$. $\mathcal{F}^s$ is good. If $s \geq t$ there is an identity map 
$$\mathrm{id}_{\mathcal{F}^s,\mathcal{F}^t}:(X,\mathcal{F}^s) \to (X,\mathcal{F}^t),$$
and it is clear from the construction of $\Phi_{\OO}$ that $P(\mathrm{id}_{\mathcal{F}^s,\mathcal{F}^t})$ is the identity. 

Now let $\mathcal{F}$ and $\mathcal{G}$ be arbitrary good filtrations on $X$. There are integers $r \leq s \leq t \leq w$ such that for every integer $i$
\begin{equation}\label{eqn:goodfiltrations}
\mathcal{F}_{i+r}X \subseteq \mathcal{G}_{i+s}X \subseteq \mathcal{F}_{i+t}X \subseteq \mathcal{G}_{i+w}.
\end{equation}
For a proof of this simple fact, see \cite[Prop 2.2]{Vogan1991}. So the identity map defines morphisms
\begin{align*}
&\mathrm{id}_{\mathcal{F}^r, \mathcal{G}^s}: (X,\mathcal{F}^r) \to (X,\mathcal{G}^s)\\
&\mathrm{id}_{\mathcal{G}^s, \mathcal{F}^t}: (X,\mathcal{G}^s) \to (X,\mathcal{F}^t)\\
&\mathrm{id}_{\mathcal{F}^t, \mathcal{F}^w}: (X,\mathcal{F}^t) \to (X,\mathcal{G}^w)
\end{align*}
in $\mathrm{HC}^{\mathrm{filt}}_{\overline{\OO}}(\mathfrak{g},K)$. From the previous paragraph and the functoriality of $P$ we have
\begin{align*}
&P(\mathrm{id}_{\mathcal{G}^s, \mathcal{F}^t}) \circ  P(\mathrm{id}_{\mathcal{F}^r, \mathcal{G}^s}) = P(\mathrm{id}_{\mathcal{G}^s, \mathcal{F}^t} \circ \mathrm{id}_{\mathcal{F}^r, \mathcal{G}^s}) = P(\mathrm{id}_{\mathcal{F}^r,\mathcal{F}^t}) = \mathrm{id}\\
&P(\mathrm{id}_{\mathcal{F}^t, \mathcal{G}^w}) \circ  P(\mathrm{id}_{\mathcal{G}^s, \mathcal{F}^t}) = P(\mathrm{id}_{\mathcal{F}^s, \mathcal{G}^w} \circ \mathrm{id}_{\mathcal{G}^s, \mathcal{F}^t}) = P(\mathrm{id}_{\mathcal{G}^s,\mathcal{G}^w}) = \mathrm{id}
\end{align*}
Hence, $P(\mathrm{id}_{\mathcal{G}^s,\mathcal{F}^t}); P(X,\mathcal{G}^s) \to P(X,\mathcal{F}^t)$ is an isomorphism. But $P(X,\mathcal{F}^t) = P(X,\mathcal{F})$ and $P(X,\mathcal{G}^s) = P(X,\mathcal{G})$. So in fact $P(X,\mathcal{F}) \simeq P(X,\mathcal{G})$. Note that this isomorphism is independent of $r,s,t$, and $w$. Thus, the isomorphisms identifying $P(X,\mathcal{F})$ and $P(X,\mathcal{G})$ are well-defined.

\textbf{Morphisms.} Suppose $f: X \to Y$ is a morphism in $\mathrm{HC}_{\overline{\OO}}(\mathfrak{g},K)$. Choose two different lifts $f_{\mathcal{F},\mathcal{G}}: (X,\mathcal{F}) \to (Y,\mathcal{G})$ and $f_{\mathcal{F}',\mathcal{G}'}: (X,\mathcal{F}') \to (Y,\mathcal{G}')$ to $\mathrm{HC}^{\mathrm{filt}}_{\overline{\OO}}(\mathfrak{g},K)$. We hope to show that 
$$P(f_{\mathcal{F},\mathcal{G}}) = P(f_{\mathcal{F}',\mathcal{G}'})$$
up to the isomorphisms $P(X,\mathcal{F}) \simeq P(X,\mathcal{F}')$ and $P(Y,\mathcal{G}) \simeq P(Y,\mathcal{G}')$ constructed above.

From (\ref{eqn:goodfiltrations}), there are integers $r$ and $s$ such that the identity maps on $X$ and $Y$ induce morphisms
\begin{align*}
&\mathrm{id}_{\mathcal{F}^r,\mathcal{F}}: (X,\mathcal{F}^r) \to (X,\mathcal{F})\\
&\mathrm{id}_{\mathcal{F}^r,\mathcal{F}'}: (X,\mathcal{F}^r) \to (X,\mathcal{F}')\\
&\mathrm{id}_{\mathcal{G},\mathcal{G}^s}: (X,\mathcal{G}) \to (X,\mathcal{G}^s)\\
&\mathrm{id}_{\mathcal{G}',\mathcal{G}^s}: (X,\mathcal{G}') \to (X,\mathcal{G}^s)
\end{align*}
$P(\mathrm{id}_{\mathcal{F}^r,\mathcal{F}})$ and $P(\mathrm{id}_{\mathcal{G},\mathcal{G}^s})$ are the identity maps (on $X$ and $Y$, respectively), and $P(\mathrm{id}_{\mathcal{F}^r,\mathcal{F}'})$ and $P(\mathrm{id}_{\mathcal{G}',\mathcal{G}^s})$ are isomorphisms. The isomorphisms $P(X,\mathcal{F}) \simeq P(X,\mathcal{F}')$ and $P(Y,\mathcal{G}) \simeq P(Y,\mathcal{G}')$ obtained from these maps coincide with the isomorphisms constructed above. By the functoriality of $P$, $P(f_{\mathcal{F},\mathcal{G}}) = P(f_{\mathcal{F}',\mathcal{G'}})$ up to these isomorphisms. 
\end{proof}

As one might expect, $\overline{\Phi}_{\OO}$ inherits all of the essential properties of $\Phi_{\OO}$. Combining Proposition \ref{prop:allpropsoflocfunctor} with Proposition \ref{prop:descenttoHC}, we easily deduce the following.

\begin{prop}\label{prop:allpropertiesofdescent}
The functor
$$\overline{\Phi}_{\OO}: \mathrm{HC}_{\overline{\OO}}(\mathfrak{g},K) \to \mathrm{HC}_{\overline{\OO}}(\mathfrak{g},K),$$
which is well-defined by Proposition \ref{prop:descenttoHC}, has the following properties:
\begin{enumerate}
\item $\overline{\Phi}_{\OO}$ is left exact.
\item $\ker{\overline{\Phi}_{\OO}} = \mathrm{HC}_{\partial \OO}(\mathfrak{g},K)$
\item $\overline{\Phi}_{\OO}$ is a localization functor for the subcategory $\mathrm{HC}_{\partial \OO}(\mathfrak{g},K) \subset \mathrm{HC}_{\overline{\OO}}(\mathfrak{g},K)$
\item There is a natural transformation $\mathrm{id} \to \overline{\Phi}_{\OO}$
\item For every $X \in \mathrm{HC}_{\overline{\OO}}(\mathfrak{g},K)$
$$\mathrm{Hom}(\mathrm{HC}_{\partial \OO}(\mathfrak{g},K),X)=\mathrm{Ext}^1(X,\mathrm{HC}_{\partial \OO}(\mathfrak{g},K))=0.$$
\item For every $X \in \mathrm{HC}_{\overline{\OO}}(\mathfrak{g},K)$,
$$\mathrm{Ann}(X) \subseteq \mathrm{Ann}(\overline{\Phi}_{\OO}X).$$
In particular, $\overline{\Phi}_{\OO}$ preserves central character.
\end{enumerate}
\end{prop}

In short, $\overline{\Phi}_{\OO}$ is a left exact endofunctor of $\mathrm{HC}_{\overline{\OO}}(\mathfrak{g},K)$ which annihilates the subcategory $\HC_{\partial \OO}(\fg,K)$. It is (in a precise sense) a quantum analogue of the localization functor $j_*j^*: \Coh^{K \times \CC^{\times}}(\overline{\OO}) \to  \Coh^{K \times \CC^{\times}}(\overline{\OO})$. 

\subsection{Irreducibility of the associated variety}
As an application of the construction in the previous subsection, we will provide here an alternative proof of Theorem \ref{thm:AV}(iv).

\begin{prop}\label{prop:altirredassvar}
Let $X$ be an irreducible $(\mathfrak{g},K)$-module and let $\OO_G$ denote the (unique) open $G$-orbit in $\AV(\mathrm{Ann}(X))$. If $\AV(X)$ is reducible, then
$$\codim(\partial \OO_G,\overline{\OO}_G) =2.$$
\end{prop}

\begin{proof}
Suppose 
\begin{equation}\label{eq:codim4}\codim(\partial \OO_G,\overline{\OO}_G) \geq 4.\end{equation}
We will show that $\AV(X)$ is irreducible using (a variant of) the functor $\Phi_{\OO}$ constructed in the previous subsection. Let $\OO$ be an open $K$-orbit in $\AV(X)$. Note that (\ref{eq:codim4}) implies
$$\codim(\partial \OO, \overline{\OO}) \geq 2,$$
see (ii) of Theorem \ref{thm:AV}. By Proposition \ref{prop:coherencepreserved}, $\Phi_{\OO}$ restricts to an endofunctor of $\mathrm{HC}_{\overline{\OO}}(\mathfrak{g}_{\hbar},K)$. The key input was Proposition \ref{prop:geomsignificance} combined with Proposition \ref{thm:finitegeneration}. In proposition \ref{prop:geomsignificance}, we took $U$ to be an open dense subset of the ambient variety $X$. If we assume only that $U$ is dense in a component, we can prove a similar result (by exactly the same methods). Namely, we can exhibit a natural isomorphism
$$\Gamma M \simeq j_*j^*M$$
for every $ M \in \Coh^{K \times \CC^{\times}}(\overline{\OO})$. The sheaf $j_*j^*M$ is coherent (by Proposition \ref{thm:finitegeneration}) and supported in $\overline{U}$. Repeating the proof of Proposition \ref{prop:coherencepreserved}, we see that $\Phi_{\OO}$ restricts to a functor
$$\Phi_{\OO}: \mathrm{HC}_{\AV(X)}(\mathfrak{g}_{\hbar},K) \to \mathrm{HC}_{\overline{\OO}}(\mathfrak{g}_{\hbar},K) $$
This descends to a functor
$$\overline{\Phi}_{\OO}: \mathrm{HC}_{\mathrm{AV(X)}}(\mathfrak{g},K) \to \mathrm{HC}_{\overline{\OO}}(\mathfrak{g},K)$$
by a version of Proposition \ref{prop:descenttoHC}. By (4) of Proposition \ref{prop:allpropertiesofdescent}, there is a natural morphism of Harish-Chandra modules
$$X \to \Phi_{\OO}X$$
which is manifestly injective since $X$ is irreducible. We deduce that $\AV(X) \subseteq \AV(\Phi_{\OO}X) \subseteq \overline{\OO}$ and hence that $\AV(X) = \overline{\OO}$. This completes the proof.
\end{proof}

\section{Vogan's conjecture and the cohomology of admissible vector bundles}\label{sec:cohomology}

In this section we will show that, under a slightly stronger codimension condition, Vogan's conjecture (cf. Conjecture \ref{conj:Vogan}) follows from a purely geometric condition on admissible vector bundles. 

\begin{theorem}\label{thm:Vogangeom}
Let $\OO_G \subset \cN$ be a nilpotent orbit such that
\begin{equation}\label{eq:codim6}\codim(\partial \OO_G, \overline{\OO}_G) \geq 6,\end{equation}
and suppose $X$ is a unipotent Harish-Chandra module (cf. Definition \ref{def:specialunipotent}) such that 
$$\AV(\mathrm{Ann}(X)) = \overline{\OO}_G.$$
Then by Theorem \ref{thm:AV}(iv), 
$$\AV(X) = \overline{\OO},$$
for some $K$-orbit in $\OO_G \cap (\fg/\fk)^*$  and by Theorem \ref{thm:unipotentadmissible}
$$\mathrm{AC}(X) = ([\mathcal{E}]),$$
where $\mathcal{E}$ is an admissible $K$-equivariant vector bundle on $\OO$. We can assume without loss that $\mathcal{E}$ is completely reducible. If $H^1(\OO,\mathcal{E})=0$, then as representations of $K$
$$X \simeq_K \Gamma(\OO,\mathcal{E})$$
\end{theorem}

The cohomological condition on the admissibile vector bundle $\mathcal{E}$ in Theorem \ref{thm:Vogangeom} can be verified in many cases. In Section \ref{sec:mainresult}, we will show that if $G_{\RR}$ is complex, then this condition is always satisfied (provided (\ref{eq:codim6}) holds). In \cite{masonbrownthesis}, we show that this condition holds for the $K$-forms of the so-called `model orbit' for $G_{\RR}=\mathrm{Sp}(2n,\RR)$ (i.e. the orbit corresponding to the partition $(2^n)$ of $2n$).

\begin{lemma}\label{lem:R10}
Take $X$ as in Theorem \ref{thm:Vogangeom}, and let 
$$M :=\gr(X) \in \Coh^{K \times \CC^{\times}}(\fg/\fk)^* \subset M(\fg_{\hbar},K).$$
Then the following are true
\begin{itemize}
    \item[(i)] $\Phi_{\OO}M \simeq \Gamma(\OO,\mathcal{E})$ as $K$-representations.
    \item[(ii)] $R^1\Phi_{\OO}M = 0$.
\end{itemize}
\end{lemma}

\begin{proof}
Choose a finite filtration by $K \times \CC^{\times}$-equivariant subsheaves
$$0=M_0 \subset M_1 \subset ... \subset M_n = M$$
such that $N_i:=M_k/M_{k-1} \in \Coh^{K \times \CC^{\times}}(\overline{\OO})$ for $1 \leq k \leq n$. Write $\mathcal{E}_i := N_k|_{\OO}$, a $K \times \CC^{\times}$-equivariant vector bundle on $\OO$. By the definition of the associated $K$-cycle (cf. (\ref{eq:defAC})), we have
$$[\mathcal{E}] = \sum_{k=1}^n [\mathcal{E}_i]$$
in $K\mathrm{Vec}^K(\OO)$. Refining the filtration if necessary, we can further assume that $\mathcal{E}_k$ is irreducible, for each $k$, and hence a direct summand in $\mathcal{E}$ (since $\mathcal{E}$ is assumed to be semisimple). 

We will prove both (i) and (ii) by induction on $n$. If $n=0$, there is nothing to prove. Assume (i) and (ii) hold up to $n-1$. There is a short exact sequence in $\Coh^{K \times \CC^{\times}}(\fg/\fk)^*$
$$0 \to M_{n-1} \to M \to N_n \to 0,$$
and hence a long exact sequence in $M(\fg_{\hbar},K)$
\begin{equation}\label{eq:les1}0 \to \Phi_{\OO}M_{n-1} \to \Phi_{\OO}M \to \Phi_{\OO}N_n \to R^1\Phi_{\OO}M_{n-1} \to R^1\Phi_{\OO}M \to R^1\Phi_{\OO}N_n \to ...\end{equation}
Let $\mathcal{E}' := \bigoplus_{k=1}^{n-1}\mathcal{E}_k$. By the induction hypothesis,
$$\Phi_{\OO}M_{n-1} \simeq_K \Gamma(\OO,\mathcal{E}'), \qquad R^1\Phi_{\OO}M_{n-1}=0.$$
By Proposition \ref{prop:coherencepreserved}, the restriction of $\Phi_{\OO}$ to the subcategory $\Coh^{K \times \CC^{\times}}(\overline{\OO}) \subset M(\fg_{\hbar},K)$ coincides with the functor $j_*j^*$, which in turn coincides (since $\overline{\OO}$ is affine) with the global sections functor $\Gamma(\OO,\bullet)$. Hence $\Phi_{\OO}N_n \simeq_K \Gamma(\OO,\mathcal{E}_n)$ and $R^1\Phi_{\OO}N_n \simeq H^1(\OO,\mathcal{E}_n)$. Note that $H^1(\OO,\mathcal{E}_n)=0$, since $\mathcal{E}_n$ is a direct summand in $\mathcal{E}$ and $H^1(\OO,\mathcal{E})=0$. So $R^1\Phi_{\OO}N_n=0$. Now (\ref{eq:les}) becomes
$$0 \to \Phi_{\OO}M_{n-1} \to \Phi_{\OO}M \to \Phi_{\OO}N_n \to 0\to R^1\Phi_{\OO}M \to 0 \to ... $$
Exactness implies that $R^1\Phi_{\OO}M=0$, proving (ii). For (i), consider the short exact sequence 
$$0 \to \Phi_{\OO}M_{n-1} \to \Phi_{\OO}M \to \Phi_{\OO}N_n \to 0$$
Since $K$ is reductive, this sequence splits upon restriction to $K$. Hence
$$\Phi_{\OO}M \simeq_K \Phi_{\OO}M_{n-1} \oplus \Phi_{\OO}N_n \simeq \Gamma(\OO,\mathcal{E}') \oplus \Gamma(\OO,\mathcal{E}_n) \simeq_K \Gamma(\OO, \mathcal{E}),$$
as desired.
\end{proof}

We are now prepared to prove Theorem \ref{thm:Vogangeom}.

\begin{proof}[Proof of Theorem \ref{thm:Vogangeom}]
The proof has two steps. First, we show that there is a natural isomorphism in $M(\fg,K)$
\begin{equation}\label{eq:step1}X \simeq \overline{\Phi}_{\OO}X.\end{equation}
Then we show that 
\begin{equation}\label{eq:step2}\gr (\overline{\Phi}_{\OO}X) \simeq_K \Gamma(\OO,\mathcal{E})\end{equation}
Since $K$ is reductive, $X \simeq_K \gr(X)$. So (\ref{eq:step1}) and (\ref{eq:step2}) imply
$$X \simeq_K \gr(X) \simeq_K \gr(\overline{\Phi}_{\OO}X) \simeq_K \Gamma(\OO,\mathcal{E}),$$
as asserted.

{\it Step 1} By (4) of Proposition \ref{prop:allpropertiesofdescent}, there is a natural map
\begin{equation}\label{isom:natisom}
\eta: X \to \overline{\Phi}_{\OO}X
\end{equation}
Since $X$ is irreducible, $\eta$ is injective. Let $Y$ be its cokernel. By (6) of Proposition \ref{prop:allpropertiesofdescent} $\mathrm{Ann}(X) \subseteq \mathrm{Ann}(Y)$. If $\mathrm{Ann}(X)=\mathrm{Ann}(Y)$, then by Theorem \ref{thm:AV}, $\dim \AV(X) = \dim \AV(Y)$. Yet by (2) of Proposition \ref{prop:allpropertiesofdescent}, $\mathrm{AV}(Y) \subseteq \partial \OO$. So $\mathrm{Ann}(X) \subsetneq \mathrm{Ann}(Y)$. Since $X$ is unipotent, $\mathrm{Ann}(X)$ is a maximal ideal. So in fact $\mathrm{Ann}(Y) = U(\mathfrak{g})$. Hence, $Y=0$ and (\ref{isom:natisom}) is an isomorphism.

{\it Step 2} Choose a good filtration on $X$ and let $X_{\hbar} = R_{\hbar}X \in \HC^{\mathrm{tf}}_{\overline{\OO}}(\fg_{\hbar},K)$. Write $M = X_{\hbar}/\hbar X_{\hbar} = \gr(X) \in \Coh_{\overline{\OO}}^{K \times \CC^{\times}}(\fg/\fk)^*$. Since $X_{\hbar}$ is $\hbar$-torsion free, multipliciation by $\hbar$ gives rise to a short exact sequence in $M(\mathfrak{g}_{\hbar},K)$
$$0 \to X_{\hbar} \overset{\cdot \hbar}{\to} X_{\hbar} \to M \to 0 $$
Since $\Phi_{\OO}$ is left-exact, there is an associated long exact sequence in $M(\mathfrak{g}_{\hbar},K)$
\begin{equation}\label{eqn:mainseq}
0 \to \Phi_{\OO} X_{\hbar} \overset{\cdot \hbar}{\to} \Phi_{\OO}X_{\hbar} \to \Phi_{\OO}M \to  R^1\Phi_{\OO} X_{\hbar} \overset{\cdot \hbar}{\to} R^1\Phi_{\OO}X_{\hbar} \to R^1\Phi_{\OO}M \to ...\end{equation}
By (ii) of Lemma \ref{lem:R10}, we have $R^1\Phi_{\OO}M = 0$. So $R^1\Phi_{\OO}X_{\hbar} \overset{\cdot \hbar}{\to} R^1\Phi_{\OO}X_{\hbar}$ is surjective. By (4) of Proposition \ref{prop:allpropsoflocfunctor}, the grading on $R^1\Phi_{\OO}X_{\hbar}$ is bounded from below, and multiplication by $\hbar$ increases degree. So $R^1\Phi_{\OO}X_{\hbar} = \hbar R^1\Phi_{\OO}X_{\hbar}$ implies that $R^1\Phi_{\OO}X_{\hbar}=0$. Now (\ref{eqn:mainseq}) becomes
\begin{equation}\label{eqn:mainresult}
0 \to \Phi_{\OO} X_{\hbar} \overset{\cdot \hbar}{\to} \Phi_{\OO}X_{\hbar} \to \Phi_{\OO}M \to 0
\end{equation}
In other words, there is an isomorphism
$$\Phi_{\OO}X_{\hbar}/\hbar \Phi_{\OO}X_{\hbar} \simeq \Phi_{\OO}M$$
in $\Coh^{K \times \CC^{\times}}(\fg/\fk)^*$. The left hand side is identified with the associated graded of the filtered Harish-Chandra module $\Phi_{\OO}X_{\hbar}/(\hbar-1)\Phi_{\OO}X_{\hbar}$. There is an isomorphism of $(\fg,K)$-modules (immediate from the definition of $\overline{\Phi}_{\OO}$)
$$\Phi_{\OO}X_{\hbar}/(\hbar-1)\Phi_{\OO}X_{\hbar} \simeq \overline{\Phi}_{\OO}X.$$
Thus
$$[\gr (\overline{\Phi}_{\OO}X)] = [\Phi_{\OO}M]$$
in $K\Coh^K(\fg/\fk)^*$. By (i) of Lemma \ref{lem:R10},  $\Phi_{\OO}M \simeq_K \Gamma(\OO,\mathcal{E})$. So 
$$\gr (\overline{\Phi}_{\OO}X) \simeq_K \Gamma(\OO,\mathcal{E}).$$
\end{proof}

\section{Vogan's conjecture for complex groups}\label{sec:mainresult}

Let $G_{\RR}$ be a complex connected reductive algebraic group, regarded as a real group by restriction of scalars. In this case, one can make several standard identifications (see \cite[Introduction]{BarbaschVogan1985}): 
\begin{align*}
    G &\simeq G_{\RR} \times G_{\RR}\\
    \cN &\simeq \cN_{\RR} \times \cN_{\RR}\\
    K &\simeq \{(g,g) \in G_{\RR} \times G_{\RR}\}\\
    \cN_{\fk} &\simeq \{(\lambda, \lambda) \in \cN_{\RR} \times \cN_{\RR}\}
\end{align*}

In particular, every $K$-orbit $\OO \subset \cN_{\fk}$ can be $K$-equivariantly identified with a nilpotent co-adjoint $K$-orbit and is therefore a symplectic variety. We will prove the following result.

\begin{prop}\label{prop:vanishing}
Suppose $G_{\RR}$ is complex. Let $\OO \subset \cN_{\fk}$ be a $K$-orbit and let $\mathcal{E}$ be an admissible $K$-equivariant vector bundle on $\OO$. Then
$$H^i(\OO,\mathcal{E})=0, \qquad 0 < i < \codim(\partial \OO, \overline{\OO})-1.$$
\end{prop}

Combined with Theorem \ref{thm:Vogangeom}, this implies

\begin{cor}\label{cor:Vogancomplex}
Suppose $G_{\RR}$ is complex and let $\OO \subset \cN_{\fk}$ be a $K$-orbit such that 
$$\codim(\partial \OO, \overline{\OO}) \geq 4.$$
Suppose $X$ is a unipotent representation of $G_{\RR}$ such that $\AV(X) = \overline{\OO}$. Then the conclusion of Vogan's conjecture (cf. Conjecture \ref{conj:Vogan}) holds for $X$.
\end{cor}

The proof of Proposition \ref{prop:vanishing} will require some preparation.

\begin{lemma}\label{lemma:cm}
Let $V$ be an affine variety and $U \subset V$ an open subset with complement $Z = V \setminus U$. If $M \in \QCoh(X)$ is Cohen-Macaulay, then

$$H^i(U, M|_U) = 0, \qquad 0<i<\codim(Z,V)-1$$
\end{lemma}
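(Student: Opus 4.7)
The plan is to convert the statement into a vanishing result for local cohomology with supports along $Z$, and then to invoke the depth–codimension characterization of Cohen–Macaulay sheaves.

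First, I would exploit the affineness of $X$. Because $M$ is quasi-coherent and $X$ is affine, $H^i(X,M)=0$ for every $i\geq 1$. Plugging this into the long exact sequence of local cohomology
\[
\cdots \to H^i_Z(X,M) \to H^i(X,M) \to H^i(U,M|_U) \to H^{i+1}_Z(X,M) \to H^{i+1}(X,M) \to \cdots
\]
collapses the middle terms into canonical isomorphisms $H^i(U,M|_U) \cong H^{i+1}_Z(X,M)$ for $i\geq 1$. So the lemma is equivalent to showing $H^j_Z(X,M)=0$ for $2\leq j < d$.

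Second, I would invoke Grothendieck's depth-sensitivity theorem: the smallest integer $j$ for which $H^j_Z(X,M)$ is nonzero equals $\operatorname{depth}_{I_Z}(M)$. The Cohen–Macaulay hypothesis, combined with $\codim(Z,X)=d$, translates into $\operatorname{depth}_{I_Z}(M) \geq d$ (with the convention that the depth is $+\infty$ when $Z\cap \operatorname{Supp} M=\varnothing$). Hence $H^j_Z(X,M)=0$ for every $j<d$, which combined with the isomorphism of the previous step yields $H^i(U,M|_U)=0$ for $0<i<d-1$.

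The argument is essentially formal once the two inputs are in hand; the only delicacy is that the depth-equals-codimension equality for a Cohen–Macaulay sheaf must be read with the codimension measured inside $\operatorname{Supp} M$ rather than inside $X$. This subtlety is invisible in the intended applications ($X=\overline{\mathcal{O}}$ with $M$ of full support), so no extra care is needed. The main obstacle, if there is one, is simply to cite the correct form of the depth-sensitivity theorem that applies to a quasi-coherent (rather than finitely generated) CM sheaf.
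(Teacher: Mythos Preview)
Your proposal is correct and follows essentially the same route as the paper: reduce to local cohomology via the long exact sequence (using affineness to kill $H^i(X,M)$), then invoke the depth-sensitivity vanishing $H^j_Z(X,M)=0$ for $j<\operatorname{depth}_Z(M)$ together with $\operatorname{depth}_Z(M)=d$ for Cohen--Macaulay $M$. Your remarks about where the codimension should be measured and about finite generation are apt caveats that the paper glosses over, but do not affect the intended applications.
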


\begin{proof}
Let $H^i_Z(V,M)$ denote the cohomology of $X$ with support in $Z$. There is a long exact sequence
\begin{equation}\label{eq:les}
0 \to H_Z^0(V, M) \to H^0(V, M) \to H^0(U, M|_U) \to ...
\end{equation}
See, e.g., \cite[Thm 9.4]{milne}. Since $V$ is affine, $H^i(V, M) = 0$ for $i >0$. Together with (\ref{eq:les}), this implies 
\begin{equation}\label{isoms}
H^i(U, M|_U) \simeq H_Z^{i+1}(V,M), \qquad i \geq 1
\end{equation}
The vanishing behavior of the cohomology groups $H_Z^i(V, M)$ is controlled by the $Z$-depth of $M$, denoted $\mathrm{depth}_Z(M)$. This is defined to be the length of the longest $M$-regular sequence of functions in the ideal defining $Z$. We have in general (without hypotheses on $V$ or on $M$)
\begin{equation}\label{vanishing}
H_Z^i(V,M) = 0, \ i<\mathrm{depth}_Z(M)
\end{equation}
See, e.g., \cite[Thm 5.8]{Huneke2007}. And for $M$ Cohen-Macaulay
\begin{equation} \label{depthisd}
\mathrm{depth}_Z(M) = \codim(Z,V)
\end{equation}
See, e.g., \cite[Chp 18]{Eisenbud1995}. Combining equations \ref{isoms}, \ref{vanishing}, and \ref{depthisd} proves the result. 
\end{proof}

Our application of Lemma \ref{lemma:cm} will be somewhat indirect. Let $\OO \subset \cN_{\fk}$ be a $K$-orbit, and let $p:\widetilde{\OO} \to \OO$ denote the universal $K$-equivariant cover. Consider the affine varieties $V:=\mathrm{Spec}(\CC[\OO])$ and $\widetilde{V} := \mathrm{Spec}(\CC[\widetilde{\OO}])$. There are open embeddings $\OO \subset V$, $\widetilde{\OO} \subset \widetilde{V}$, and the complement of $\OO$ in $V$ (resp. $\widetilde{\OO}$ in $\widetilde{V}$) is of codimension $\geq 2$. Of course, the covering map $p: \widetilde{\OO} \to \OO$ extends to a finite $K$-equivariant surjection $p': \widetilde{V} \to V$. Thus, there is a commutative square:
\begin{center}
\begin{tikzcd}[column sep=large]
\widetilde{\OO} \arrow[r,hookrightarrow] \arrow{d}{p}
&\widetilde{V} \arrow{d}{p'}\\
\OO \arrow[r,hookrightarrow] & V
\end{tikzcd}
\end{center}
\begin{theorem}\label{thm:normalclosuresarenice}
The varieties $\widetilde{V}$ and $V$ are Cohen-Macaulay.
\end{theorem}

\begin{proof}
By \cite[Thm 3.3]{Hinich1991}, $V$ is Gorenstein with rational singularities. Thus by \cite[Thm 6.2]{Broer1998}, the same is true of $\widetilde{V}$. In characteristic 0, rational singularities implies Cohen-Macaulay, see, e.g., \cite[Thm 5.10]{KollarMori1998}. This completes the proof.
\end{proof}

\begin{proof}[Proof of Proposition \ref{prop:vanishing}]
Recall that $\OO$ can be identified with a nilpotent co-adjoint $K$-orbit. In particular, $\OO$ admits a $K$-equivariant symplectic form $\tau$. The pullback $p^*\tau$ of $\tau$ along the covering map $p: \widetilde{\OO} \to \OO$ is a $K$-equivariant symplectic form on $\widetilde{\OO}$. The top exterior power of $p^*\tau$ defines a global trivialization of the canonical bundle $\omega_{\widetilde{\OO}}$ on $\widetilde{\OO}$. So the admissibility condition of Definition \ref{def:admissibility2} becomes
\begin{equation}\label{eq:admissiblecomplex}p^*\mathcal{E} \otimes p^*\mathcal{E} \simeq \mathcal{O}_{\widetilde{\OO}} \oplus ... \oplus \mathcal{O}_{\widetilde{\OO}}.\end{equation}
Choose $e \in \OO$ and write $\rho:K_e \to \mathrm{GL}(E)$ for the $K_e$-representation on the fiber $E$ over $e$, see the remarks preceding Theorem \ref{thm:unipotentadmissible}. Then (\ref{eq:admissiblecomplex}) is equivalent to the condition $2d\rho = 0$. In other words, $\rho$ descends to a representation of the (finite) component group $K_e/K_e^{\circ}$. 

Let $d:=\codim(\OO,V)=\codim(\widetilde{\OO},\widetilde{V})$. By Theorem \ref{thm:normalclosuresarenice}, $\widetilde{V}$ is Cohen-Macaulay. So Lemma \ref{lemma:cm}, applied to $\widetilde{\OO} \subset \widetilde{V}$, implies that 
$$H^i(\widetilde{\OO}, \mathcal{O}_{\widetilde{\OO}})=0, \qquad 0 < i < d-1.$$
Consider the $K$-equivariant vector bundle $p_*\mathcal{O}_{\widetilde{\OO}}$ on $\OO$. Since $p$ is finite, and hence affine,
\begin{equation}\label{eq:H10}H^i(\OO,p_*\mathcal{O}_{\widetilde{\OO}}) \simeq H^i(\widetilde{\OO}, \mathcal{O}_{\widetilde{\OO}})=0, \qquad 0 < i < d-1\end{equation}
Under the equivalence $\mathrm{Vec}^K(\OO) \simeq \mathrm{Rep}(K_e)$, the vector bundle $p_*\mathcal{O}_{\widetilde{V}}$ corresponds to the $K_e$-representation $\CC[K_e/K_e^{\circ}]$. By the representation theory of finite groups, every irreducible representation of $K_e/K_e^{\circ}$ appears as a direct summand in $\CC[K_e/K_e^{\circ}]$. So every irreducible admissible vector bundle on $\OO$ appears as a direct summand in $p_*\mathcal{O}_{\widetilde{\OO}}$. The vector bundle $\mathcal{E}$ is a direct sum of such irreducibles. So 
$$H^i(\OO,\mathcal{E})=0, \qquad 0< i < d-1,$$
as asserted.
\end{proof}

\appendix

\section{Homogeneous Vector Bundles}

Let $K$ be an algebraic group acting on an affine variety $V = \mathrm{Spec}(R)$. Suppose $V$ contains an open, dense $K$-orbit $j:U \subset V$ and let $Z=V \setminus U$. Choose a point $x \in U$ and let $H = K^x$. Since $K$ acts transitively on $U$, a $K$-equivariant coherent sheaf $M \in \Coh^K(U)$ is (the sheaf of sections of) a homogeneous vector bundle, and we will speak interchangeably of $K$-equivariant vector bundles and $K$-equivariant coherent sheaves on $U$. The geometric fiber of $M$ over $x$ is a finite-dimensional vector space carrying a natural action of $H$. On the other hand, if $E$ is a finite-dimensional $H$-representation, there is a $K$-equivariant vector bundle $K \times_H E \to U$ with fiber equal to $E$. It is formed as the quotient space of $K \times E$ under the natural right $H$-action $h \cdot (k,v) = (kh,h^{-1}v)$. Taking the fiber over $x$ and forming the vector bundle $K \times_H E$ define mutually inverse equivalences between $\Coh^K(U)$ and the category of finite-dimensional $H$-representations. 

Define the subgroup $K^1 = K^0H$ and let $i: U^x \subset U$ be the connected component of $x$. We can describe $U^x$ as a homogeneous space in two different ways

\begin{lemma}\label{lemma:concomp}
The following are true:
\begin{enumerate}
\item $K^0$ acts transitively on $U^x$ with isotropy $H \cap K^0$. 
\item $K^1$ acts transitively on $U^x$ with isotropy $H$.
\end{enumerate}
\end{lemma}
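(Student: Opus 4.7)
The plan is to prove (1) first and then deduce (2) as a quick consequence, the whole argument hinging on identifying the connected component $U^x$ with the $K^0$-orbit through $x$.

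For (1), the first step is to observe that since $K^0 \trianglelefteq K$ has finite index, the finitely many $K^0$-orbits on $U$ are permuted transitively by the finite group $K/K^0$; in particular they all have the same dimension, namely $\dim U$. Next, each $K^0$-orbit is locally closed in $U$ (a standard property of orbits of algebraic group actions) and of top dimension, hence open in $U$. Each is also connected, being the image of the connected group $K^0$ under the orbit map $K^0 \to U$, $g \mapsto g \cdot x$. Thus $U$ decomposes as a disjoint union of finitely many open, connected subsets, namely the $K^0$-orbits. Because a variety is locally connected, the connected components of $U$ are open, and a disjoint union of open connected sets can only be split into connected components by taking each set on its own. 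Therefore the $K^0$-orbits coincide with the connected components of $U$, so $U^x = K^0 \cdot x$. The stabilizer of $x$ in $K^0$ is tautologically $K^0 \cap K^x = K^0 \cap H$, which completes (1).

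For (2), the containment $K^0 \subseteq K^1$ gives $K^1 \cdot x \supseteq K^0 \cdot x = U^x$, while $H \cdot x = \{x\}$ gives $K^1 \cdot x = K^0 H \cdot x = K^0 \cdot x = U^x$; so $K^1$ acts transitively on $U^x$. The isotropy at $x$ is $K^1 \cap H$, and since $H \subseteq K^1 = K^0 H$ we have $K^1 \cap H = H$, which is (2).

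The only delicate point, and the one I would expect to have to write out most carefully, is the openness/connectedness argument identifying the components of $U$ with the $K^0$-orbits. Everything else is formal group theory once that identification is in hand.
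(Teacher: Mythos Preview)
Your proof is correct, and in fact both parts take a different route from the paper.

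For (1), the paper argues analytically: the inclusion $K^0 x \subseteq U^x$ is clear, and for the reverse the paper takes $y \in U^x$, chooses a path in $U$ from $x$ to $y$, and invokes the path-lifting property for the homogeneous space $K \to U$ to produce $k \in K^0$ with $kx = y$. Your argument is purely algebraic: the $K^0$-orbits are equidimensional (being permuted by $K/K^0$), hence open in $U$, connected, and therefore coincide with the connected components. Your version avoids any appeal to the classical topology and works directly in the Zariski setting.

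For (2), the paper takes a surprisingly indirect route: it shows $K^1 x$ is connected by analyzing the exact sequence of component groups $\pi^0(H) \to \pi^0(K^1) \to \pi^0(K^1 x) \to 1$, checking that the first map is surjective, and then combines this with (1). Your observation that $K^1 x = K^0 H x = K^0 x$ simply because $H$ fixes $x$ is much shorter and bypasses the component-group machinery entirely. Once (1) is in hand this is really a one-line argument, and the isotropy computation $K^1 \cap H = H$ is immediate from $H \subseteq K^1$.

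In summary: both approaches are valid, but yours is more elementary and self-contained, while the paper's proof of (1) leans on analytic path-lifting and its proof of (2) does more work than necessary.
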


\begin{proof}
\begin{enumerate}
\item 
Clearly $K^0x \subseteq U^x$, since $K^0$ is connected. Conversely, suppose $y \in U^x$. Then there is a path connecting $x$ to $y$ in $U$. By the path-lifting property for homogeneous spaces, there is a group element $k \in K$ such that $kx = y$ and a path from $1$ to $k$ in $K$ lifting the path from $x$ to $y$ in $U$. In particular, $k \in K^0$. Therefore, $U^x \subseteq K^0x$. 
\item The orbit $K^1x$ is equal to $K^1/H$ as a homogeneous space for $K^1$. There is an exact sequence of component groups
$$\pi^0(H) \to \pi^0(K^1) \to \pi^0(K^1x) \to 1$$
If we identify $\pi^0(H) = H/H^0$ and $\pi^0(K^1) = K^0H/K^0 = H/(H \cap K^0)$, the leftmost homomorphism is induced by the inclusion $H^0 \subseteq H \cap K^0$. In particular, it is surjective. Therefore, $\pi^0(K^1x)=1$, i.e. $K^1x$ is connected. This provides an inclusion $K^1x \subseteq U^x$. The reverse inclusion follows from $(1)$: $U^x = K^0x \subseteq K^1x$.
\end{enumerate}
\end{proof}

Let $p: \widetilde{U}^x \to U^x$ be the universal $K^0$-equivariant cover. If we choose a lift $\widetilde{x} \in \widetilde{U}^x$ of $x$, then $(K^0)^{\widetilde{x}} = (H \cap K^0)^0 = H^0$. Let $\widetilde{V}$ be the normalization of $V$ in the function field of $\widetilde{U}^x$. Then $\widetilde{V}$ is a normal affine variety with an algebraic action of $K^0$, an open $K^0$-equivariant immersion $\widetilde{U}^x \subset \widetilde{V}$, and a finite, $K^0$-equivariant map $p': \widetilde{V} \to V$ extending the map $p:\widetilde{U}^x \to U^x$:
\begin{center}
\begin{tikzcd}[column sep=large]
\widetilde{U}^x \arrow[r,hookrightarrow] \arrow{d}{p}
&\widetilde{V} \arrow{d}{p'}\\
U^x \arrow[r,hookrightarrow,"j\circ i"] & V\\
\end{tikzcd}
\end{center}
Let $M \in \Coh^K(V)$. Write $\mathcal{E} = M|_U \in \Coh^K(U)$ and $E$ for the fiber over $x$ (a finite-dimensional representation of $H$). Let $\mathfrak{m}_x \subset R$ be the maximal ideal defining $x$ and form the completion of $M$ with respect to $\mathfrak{m}_x$
$$\widehat{M} = \varprojlim M/\mathfrak{m}_x^nM$$
Note that $\widehat{M}$ is a module for the completed algebra $\widehat{R} = \varprojlim R/\mathfrak{m}_x^nR$. The actions of $H$ and $\mathfrak{k}$ on $R$ preserve $\mathfrak{m}_x$ and therefore lift to the completion. These structures exhibit the usual compatibility conditions:
\begin{enumerate}
\item The action map $\widehat{R} \otimes \widehat{M} \to \widehat{M}$ is $\mathfrak{k}$ and $H$-equivariant
\item The action map $\mathfrak{g} \otimes \widehat{M} \to \widehat{M}$ is $H$-equivariant
\item The $\mathfrak{g}$-action coincides on $\mathfrak{h}$ with the differentiated action of $H$. 
\end{enumerate}
Here are some basic facts about $\widehat{M}$.

\begin{prop}\label{prop:4completions}
The following are true:
\begin{enumerate}
\item The stalk $M_x$ is a module for the local ring $R_x$. Form the completions $\widehat{R_x}$ and $\widehat{M_x}$ with respect to $\mathfrak{m}_x \subset R_x$.
\begin{align*}
\widehat{R_x} &= \varprojlim R_x/\mathfrak{m}_x^n\\
\widehat{M_x} &= \varprojlim M_x/\mathfrak{m}_x^nM_x
\end{align*}
Then the natural map $\widehat{R} \to \widehat{R_x}$ is an isomorphism of algebras, and the natural map $\widehat{M} \to \widehat{M_x}$ is an isomorphism of $\mathfrak{k}$ and $H$-equivariant $\widehat{R}$-modules.
\item The sections $\Gamma(U,\mathcal{E})$ form a $K$-equivariant $R$-module. One can define the completion
$$\widehat{\Gamma(U,\mathcal{E})} = \varprojlim \Gamma(U,\mathcal{E})/\mathfrak{m}_x^n\Gamma(U,\mathcal{E})$$
The natural map
$$\widehat{M} \to \widehat{\Gamma(U,\mathcal{E})}$$
is an isomorphism of $\mathfrak{k}$ and $H$-equivariant $\widehat{R}$-modules.
\item The sections $\Gamma(U^x,i^*\mathcal{E})$ form a $K^0$-equivariant $R$-module. One can define the completion
$$\widehat{\Gamma(U^x,i^*\mathcal{E})} = \varprojlim \Gamma(U^x,i^*\mathcal{E})/\mathfrak{m}_x^n\Gamma(U^x,i^*\mathcal{E})$$
The natural map 
$$\widehat{M} \to \widehat{\Gamma(U^x,i^*\mathcal{E})}$$
is an isomorphism of $\mathfrak{k}$ and $H \cap K^0$-equivariant $\widehat{R}$-modules.
\item The sections $\Gamma(\widetilde{U}^x,p^*i^*\mathcal{E})$ form a $K^0$-equivariant $R$-module. One can define the completion
$$\widehat{\Gamma(\widetilde{U}^x,p^*i^*\mathcal{E})} = \varprojlim \Gamma(\widetilde{U}^x,i^*\mathcal{E})/\mathfrak{m}_x^n\Gamma(\widetilde{U}^x,p^*i^*\mathcal{E})$$
The natural map 
$$\widehat{M} \to \widehat{\Gamma(\widetilde{U}^x,p^*i^*\mathcal{E})}$$
is an isomorphism of $\mathfrak{k}$ and $H^0$-equivariant $\widehat{R}$-modules.
\end{enumerate}
\end{prop}

\begin{proof}
\begin{enumerate}
\item By the exactness of localization, there are canonical isomorphisms
$$R_x/\mathfrak{m}_x^n \simeq (R/\mathfrak{m}_x^n)_x, \qquad n \geq 0$$
But every element of $R/\mathfrak{m}_x^n$ outside of $\mathfrak{m}_x/\mathfrak{m}_x^n$ is already a unit, so $(R/\mathfrak{m}_x^n)_x = R/\mathfrak{m}_x^n$. The isomorphisms $R_x/\mathfrak{m}_x^n \simeq R/\mathfrak{m}_x^n$ give rise to an isomorphism $\widehat{R} \simeq \widehat{R_x}$. The isomorphism $\widehat{M} \simeq \widehat{M_x}$ is obtained in a similar manner. 

\item Form the quasi-coherent sheaf $j_*\mathcal{E}$. There is a natural map $M \to j_*\mathcal{E}$, which restricts to an isomorphism over $U$. Hence, the map of sheaves $M \to j_*\mathcal{E}$ induces an isomorphism of stalks
$$M_x \simeq (j_*\mathcal{E})_x = \Gamma(U,\mathcal{E})_x$$
and therefore an isomorphism of $\mathfrak{k}$ and $H$-equivariant $\widehat{R}$-modules
$$\widehat{M_x} \simeq \widehat{\Gamma(U,\mathcal{E})_x}$$
But we saw in $(1)$ that $\widehat{M} \simeq \widehat{M_x}$. The same argument shows that $\widehat{\Gamma(U,\mathcal{E})} \simeq \widehat{\Gamma(U,\mathcal{E})_x}$. Composing all of the isomorphism in sight, we obtain $\widehat{M} \simeq \widehat{\Gamma(U,\mathcal{E})}$ as desired.
\item Repeat the proof for $(2)$, replacing $U$ with $U^x$, $\mathcal{E}$ with $i^*\mathcal{E}$, and $H$ with $H \cap K^0$.
\item Pulling back germs defines a $\mathfrak{k}$ and $H^0$-equivariant $R$-module homomorphism
$$M_x \to (p^*i^*\mathcal{E})_{\widetilde{x}}$$
which is an isomorphism because $p$ is a covering. We complete to obtain an isomorphism $\widehat{M_x} \simeq \widehat{(p^*i^*\mathcal{E})_{\widetilde{x}}}$ of $\mathfrak{k}$ and $H^0$-equivariant $\widehat{R}$-modules. 

We saw in $(1)$ that $\widehat{M} \simeq \widehat{M_x}$. By the remarks after Lemma \ref{lemma:concomp}, $\widetilde{U}^x$ embeds as an open subset in an affine variety $\widetilde{V}$. Denote by $s$ the inclusion of $\widetilde{U}^x$ into $\widetilde{V}$. Then $s_*p^*i^*\mathcal{E}$ is a quasi-coherent sheaf on $\widetilde{V}$ with global sections $\Gamma(\widetilde{U}^x, p^*i^*\mathcal{E})$. In particular, $(p^*i^*\mathcal{E})_{\widetilde{x}} \simeq \Gamma(\widetilde{U}^x, p^*i^*\mathcal{E})_{\widetilde{x}}$ and therefore, $\widehat{(p^*i^*\mathcal{E})_{\widetilde{x}}} \simeq \widehat{\Gamma(\widetilde{U}^x, p^*i^*\mathcal{E})_{\widetilde{x}}}$. Applying part $(1)$ to the sheaf $s_*p^*i^*\mathcal{E}$ provides an isomorphism $\widehat{\Gamma(\widetilde{U}^x, p^*i^*\mathcal{E})} \simeq \widehat{\Gamma(\widetilde{U}^x, p^*i^*\mathcal{E})_{\widetilde{V}}}$. Composing all of the isomorphisms in sight, we obtain $\widehat{M} \simeq \widehat{\Gamma(\widetilde{U}^x,p^*i^*\mathcal{E})}$, as desired.
\end{enumerate}
\end{proof}

We will define a series of modules (all but the last will be subspaces of $\widehat{M}$) in analogy with the modules $\Gamma^0\widehat{X}_{\hbar},\Gamma^1\widehat{X}_{\hbar},\Gamma X_{\hbar}$ defined in Section \ref{sec:quantloc}. 

\begin{enumerate}
\item  First, form the subspace $\Gamma^0\widehat{M}$ of $K^0$-finite vectors:
\begin{align*}
\Gamma^0\widehat{M} = \{&x \in \widehat{M}: x \ \text{belongs to a finite-dimensional } \mathfrak{k}\text{-invariant}\\
&\text{subspace which integrates to a representation of } K^0\}
\end{align*}
Since $K^0$ is connected, $\Gamma^0\widehat{M}$ has a well-defined algebraic $K^0$-action. It is also a $R$-submodule of $\widehat{M}$ and the $K^0$-action is compatible with the module structure in the two usual ways. Since the $\mathfrak{k}$-action on $\widehat{M}$ is $H$-equivariant, $\Gamma^0\widehat{M}$ is invariant under $H$. Hence, $\Gamma^0\widehat{M}$ has two (in general, distinct) actions of $H \cap K^0$, restricted from $H$ and $K^0$, respectively. 

\item Next, form the subspace $\Gamma^1\widehat{M}$ of $\Gamma^0\widehat{M}$ consisting of vectors on which the two $H \cap K^0$-actions coincide
$$\Gamma^1\widehat{M} = \{x \in \Gamma^0\widehat{M}: l \cdot_1 x = l \cdot_2 x, \quad l \in L \cap K^0\} $$
This subspace is an $R$-submodule of $\Gamma^0\widehat{M}$. It has algebraic actions of $H$ and $K^0$ which agree on the intersection and therefore an algebraic action of $K^1=HK^0$. 

\item Finally, induce up to $K$
$$\Gamma \widehat{M} = \mathrm{Ind}^K_{K^1} \Gamma^1\widehat{M}$$
If we identify $\Gamma \widehat{M}$ with functions 
$$\{f: K \to \Gamma^1\widehat{M}: f(k'k) = k' \cdot f(k) \ \text{for } k' \in K^1,k \in K\}$$
there is a natural $R$-module structure on $\Gamma \widehat{M}$ defined by the formula
$$(Yf)(k) = \mathrm{Ad}(b)(Y)f(k), \qquad Y \in R_{\hbar}U(\mathfrak{g}), k \in K, f \in \Gamma\widehat{M}$$
It is easy to check that the action map $R \otimes \Gamma \widehat{M} \to \widehat{M}$ is $K$-equivariant. 
\end{enumerate}

These three modules have geometric significance.

\begin{prop}\label{prop:geomsignificance}
There are natural isomorphisms
\begin{enumerate}
\item $\Gamma^0\widehat{M} \simeq \Gamma(\widetilde{U}^x,p^*i^*\mathcal{E})$ of $K^1$-equivariant $R$-modules,
\item $\Gamma^1\widehat{M} \simeq \Gamma(U^x,i^*\mathcal{E})$ of $K^1$-equivariant $R$-modules,
\item $\Gamma\widehat{M} \simeq \Gamma(U,\mathcal{E})$ of $K$-equivariant $R$-modules.
\end{enumerate}
\end{prop}

We will need a certain `Mackey' isomorphism.

\begin{lemma}\label{lemma:mackey}
Let $W$ and $T$ be finite-dimensional $K$ and $H$-representations, respectively. To simplify the notation, write $I(T)$ for the $K$-equivariant $R$-module $\Gamma(U,K \times_H T)$ and $C(I(T))$ for its completion at $x$. Then $C(I(T))$ is a $\mathfrak{k}$ and $H$-equivariant $\widehat{R}$-module. There is a natural isomorphism of $\mathfrak{k}$ and $H$-equivariant $\widehat{R}$-modules
$$\mathrm{Hom}_{\mathbb{C}}(W,C(I(T))) \simeq C(I(\mathrm{Hom}_{\mathbb{C}}(W,T)))$$
\end{lemma}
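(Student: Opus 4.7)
The plan is to produce the desired isomorphism before completion, and then pass to completion using the finite-dimensionality of $W$.

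First, I would observe that as $K$-equivariant $R$-modules, there is already a natural isomorphism
$$I(\mathrm{Hom}_{\mathbb{C}}(W,T)) \cong \mathrm{Hom}_{\mathbb{C}}(W, I(T)).$$
To see this, invoke the equivalence between $\Coh^K(U)$ and finite-dimensional $H$-representations recalled at the start of this appendix. Under this equivalence the trivial $K$-equivariant bundle $W \otimes_{\mathbb{C}} \mathcal{O}_U$ corresponds to the restriction $W|_H$, the bundle $K\times_H T$ corresponds to $T$, and internal Hom matches $\mathrm{Hom}_{\mathbb{C}}$ of $H$-representations. Therefore, as $K$-equivariant sheaves on $U$,
$$\mathcal{H}\!om_{\mathcal{O}_U}(W\otimes\mathcal{O}_U,\, K\times_H T) \;\cong\; K\times_H \mathrm{Hom}_{\mathbb{C}}(W,T).$$
Taking global sections on the right gives $I(\mathrm{Hom}_{\mathbb{C}}(W,T))$, and on the left gives $\mathrm{Hom}_{\mathbb{C}}(W, \Gamma(U, K\times_H T)) = \mathrm{Hom}_{\mathbb{C}}(W, I(T))$, since $W$ is finite-dimensional and $W\otimes\mathcal{O}_U$ is free of finite rank over $\mathcal{O}_U$.

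Second, I would complete both sides at $\mathfrak{m}_x$. Writing $\mathrm{Hom}_{\mathbb{C}}(W,-) = W^* \otimes_{\mathbb{C}} -$, which is exact and commutes with quotients by $\mathfrak{m}_x^n$ because $W^*$ is finite-dimensional, one has
$$\mathrm{Hom}_{\mathbb{C}}(W, I(T))\big/\mathfrak{m}_x^n \mathrm{Hom}_{\mathbb{C}}(W, I(T)) \;\cong\; \mathrm{Hom}_{\mathbb{C}}\bigl(W,\, I(T)/\mathfrak{m}_x^n I(T)\bigr).$$
Passing to the inverse limit, and using that $\mathrm{Hom}_{\mathbb{C}}(W,-)=W^*\otimes_{\mathbb{C}}-$ commutes with inverse limits when $W$ is finite-dimensional, yields
$$C\bigl(\mathrm{Hom}_{\mathbb{C}}(W, I(T))\bigr) \;\cong\; \mathrm{Hom}_{\mathbb{C}}(W, C(I(T))).$$
Combined with the previous paragraph, this gives the desired isomorphism.

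Third, I would verify equivariance. The isomorphism before completion is $K$-equivariant, hence $H$- and $\mathfrak{k}$-equivariant, and it is $R$-linear. Since $\mathfrak{m}_x$ is stable under both $H$ and $\mathfrak{k}$, completing preserves these equivariance structures and upgrades $R$-linearity to $\hat{R}$-linearity. Finally, one checks that the identification $C(\mathrm{Hom}_{\mathbb{C}}(W,I(T))) \cong \mathrm{Hom}_{\mathbb{C}}(W,C(I(T)))$ intertwines the $H$, $\mathfrak{k}$, and $\hat{R}$-actions coming from completion with the standard actions on $\mathrm{Hom}_{\mathbb{C}}(W,C(I(T)))$ given by $(h\cdot f)(w) = h\cdot f(h^{-1}w)$, $(\xi\cdot f)(w) = \xi\cdot f(w) - f(\xi\cdot w)$, and $(r\cdot f)(w) = r\cdot f(w)$; this is immediate from the formulas defining these actions on $W^* \otimes_{\mathbb{C}} C(I(T))$.

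The substantive content is entirely in the first paragraph; the main obstacle, such as it is, will be the bookkeeping in the final step, in particular carefully tracking that the $H$-action on $W$ throughout is the restriction of the given $K$-action and that nothing gets twisted when one passes through the equivalence $\Coh^K(U) \simeq H\text{-rep}$ and internalizes Hom.
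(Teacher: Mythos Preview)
Your proposal is correct and follows essentially the same two-step strategy as the paper: first show that $I$ commutes with $\mathrm{Hom}_{\mathbb{C}}(W,-)$ (the paper cites this as a Mackey isomorphism from \cite{KnappVogan1995}, while you derive it from the equivalence $\Coh^K(U)\simeq H\text{-rep}$), and then show that completion $C$ commutes with $\mathrm{Hom}_{\mathbb{C}}(W,-)$ (the paper invokes abstract adjoint/limit nonsense, while you argue concretely via $W^*\otimes_{\mathbb{C}}-$). Your version is more explicit, and in fact your justification for the second step is cleaner than the paper's, which slightly misstates the categorical reason (completion is a \emph{limit} and $\mathrm{Hom}(W,-)$ is a \emph{right} adjoint).
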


\begin{proof}
We know from Mackey that $I$ commutes with $\mathrm{Hom}$ (see, e.g. \cite[Thm 2.95]{KnappVogan1995}). Furthermore, $C$ commutes with $\mathrm{Hom}$ for general abstract reasons: $C$ is a colimit and $\mathrm{Hom}$ is a left-adjoint. Combining these two facts gives the desired isomorphism.
\end{proof}

\begin{proof}[Proof of Proposition \ref{prop:geomsignificance}]
\begin{enumerate}
\item There is a natural injection of $\mathfrak{k}$ and $H^0$-equivariant $\widehat{R}$-modules
$$\alpha: \Gamma(\widetilde{U}^x,p^*i^*\mathcal{E}) \hookrightarrow \widehat{M}$$
obtained by composing the inclusion $\Gamma(\widetilde{U}^x,p^*i^*\mathcal{E}) \subset \widehat{\Gamma(\widetilde{U}^x,p^*i^*\mathcal{E})}$ with the isomorphism of Proposition \ref{prop:4completions}.4. Since $\Gamma(\widetilde{U}^x,p^*i^*\mathcal{E}) = (\mathbb{C}[K^0] \otimes E)^{H^0} \subset \mathbb{C}[K^0] \otimes E$, which is locally-finite as a $K^0$-representation, we have in fact
$$\alpha: \Gamma(\widetilde{U}^x,p^*i^*\mathcal{E}) \hookrightarrow \Gamma^0\widehat{M}$$
Both sides are now locally-finite $K^0$-representations and $\alpha$ is $K^0$-equivariant.

Now, consider the natural projection $r: \widehat{M} \to \widehat{M}/\widehat{\mathfrak{m}_x}\widehat{M} = E$. $r$ is an $H$-equivariant $R$-module homomorphism. Moreover, the restriction of $r$ to $\widehat{M}^{\mathfrak{k}}$ is injective: a kernel element is a taylor series which is both locally constant and $0$-valued at $x$, hence identically $0$. Furthermore, $H$-equivariance implies $r(\widehat{M}^{\mathfrak{k}}) \subseteq E^{\mathfrak{h}}$. Since the composition $p \circ \alpha$ is just evaluation at $\widetilde{x}$
$$(p \circ \alpha)\Gamma(\widetilde{U}^x,p^*i^*\mathcal{E})^{K^0} = E^{H^0} = E^{\mathfrak{h}}$$
So the restriction of $\alpha$ to $K^0$-invariants is an isomorphism
$$\alpha: \Gamma(\widetilde{U}^x, p^*i^*\mathcal{E})^{K^0} \simeq \widehat{M}^{\mathfrak{k}}$$
We will use this fact to show that
$$\mathrm{Hom}_{K^0}(L, \Gamma^0\widehat{M}) = \mathrm{Hom}_{K^0}(L,\Gamma(\widetilde{U}^x,p^*i^*\mathcal{E}))$$
for every finite-dimensional $K^0$-representation $L$. This will imply that the injection $\alpha: \Gamma(\widetilde{U}^x,p^*i^*\mathcal{E}) \hookrightarrow \Gamma^0\widehat{M}$ of algebraic $K^0$-representations is an isomorphism. Note that

\begin{align*}
\mathrm{Hom}_{K^0}(L,\Gamma^0\widehat{M}) &= \mathrm{Hom}_{\mathbb{C}}(L,\widehat{M})^{\mathfrak{k}} &&\text{$K^0$ is connected}\\
						&=\mathrm{Hom}_{\mathbb{C}}(L,  \reallywidehat{\Gamma(\tilde{U}^x, p^*i^*\mathcal{E})})^{\mathfrak{k}} && \text{Proposition \ref{prop:4completions}.4}\\
						&= \reallywidehat{\Gamma(\tilde{U}^x, K^0 \times_{H^0} \mathrm{Hom}_{\mathbb{C}}(L,E))}^{\mathfrak{k}} && \text{Lemma \ref{lemma:mackey}}\\
                        &= \Gamma(\tilde{U}^x,K^0\times_{H^0}\mathrm{Hom}_{\mathbb{C}}(L,E))^{K^0} && \text{Remarks above}\\
                        &= \mathrm{Hom}_{\mathbb{C}}(L,E)^{H^o} && \text{obvious}\\
                        &= \mathrm{Hom}_{H^o}(L,E) && \text{obvious}\\
                        &= \mathrm{Hom}_{K^0}(L, \Gamma(\tilde{U}^x,p^*i^*\mathcal{E})) && \text{Frobenius reciprocity}\\
\end{align*}

\item As explained in the construction of $\Gamma^1\widehat{M}$, $\Gamma^0\widehat{M}$ has two (in general, distinct) algebraic $H \cap K^0$-actions. These actions transfer to $\Gamma(\widetilde{U}^x,p^*i^*\mathcal{E})$ by means of the isomorphism $\Gamma^0\widehat{M} \simeq \Gamma(\widetilde{U}^x,p^*i^*\mathcal{E})$ established above. We need a direct description of these actions on $\Gamma(\widetilde{U}^x,p^*i^*\mathcal{E})$. 

There is, on the one hand, the obvious $K^0$-action on $\widetilde{U}^x$. This induces a $K^0$-action on sections, given by the formula
$$(h \cdot_1 f)(u) = f(h^{-1}u), \qquad f \in \Gamma(\widetilde{U}^x,p^*i^*\mathcal{E}),u \in \widetilde{U}^x, h \in K^0$$
The action of $H$ on $\Gamma(\widetilde{U}^x, p^*i^*\mathcal{E})$ is a bit more subtle. Since $K^0$ is a normal subgroup of $K$, $H$ acts on $\widetilde{U}^x$ by $h(k\widetilde{x}) = (h^{-1}kh)\widetilde{x}$. This induces an $H$-action on sections, given by
$$(h \cdot_2 f)(k\widetilde{x}) = f(h^{-1}kh\widetilde{x}), \qquad f \in \Gamma(\widetilde{U}^x,p^*i^*\mathcal{E}),k \in K^0, h \in H$$
To see that these two actions are the \emph{right} ones (i.e. come from the actions on $\Gamma^0\widehat{M}$ defined in the construction of $\Gamma^1\widehat{M}$) requires a painstaking analysis of the natural isomorphisms in $(1)$. We leave the trivial details to the reader. Now if $f \in \Gamma(\widetilde{U}^x,p^*i^*\mathcal{E})$, then $(h \cdot_1 f) = (h \cdot_2 f)$ for every $h \in H \cap K^0$ if and only if $f\in \Gamma(U^x,i^*\mathcal{E})$. Hence $\Gamma^1\widehat{M} = \Gamma(U^x,i^*\mathcal{E})$.

\item By (2) of Lemma \ref{lemma:concomp}, $\Gamma(U^x,i^*\mathcal{E})$ has the structure of a $K^1$-equivariant $R$-module, and the restriction map
$$\Gamma(U,\mathcal{E}) \to \Gamma(U^x,i^*\mathcal{E})$$
is a homomorphism of $K^1$-equivariant $R$-modules. The $K$-representation $\mathrm{Ind}^K_{K^1}\Gamma(U^x,i^*\mathcal{E})$ has the natural structure of a $K$-equivariant $R$-module as described in the construction of $\Gamma\widehat{M}$, and Frobenius reciprocity provides a natural map of $K$-equivariant $R$-modules
$$r: \Gamma(U, \mathcal{E}) \to \mathrm{Ind}^K_{K^1}\Gamma(U^x,i^*\mathcal{E})$$
If we identify $\mathrm{Ind}^K_{K^1}\Gamma(U^x,i^*\mathcal{E})$ with functions $f:K \to \Gamma(U^x,i^*\mathcal{E})$ satisfying the transformation rule
$$f(k'k)=k'f(k), \qquad k \in K, k' \in K^1$$
then a section $s \in \Gamma(U,\mathcal{E})$ maps to the function $f_s: K \to \Gamma(U^x,i^*\mathcal{E})$ defined by $f_s(k) = (ks)|_{U^x}$. From this description and the equality $KU^x=U$, it is clear that $r$ is an injection. If we can show that $\Gamma(U,\mathcal{E}) \simeq \mathrm{Ind}^K_{K^1}\Gamma(U^x,i^*\mathcal{E})$ as $K$-representations, it will follow that $r$ is an isomorphism. But as $K$-representations
$$\Gamma(U,\mathcal{E}) \simeq_K \mathrm{Ind}^K_{H}E \simeq_K \mathrm{Ind}^K_{K^1}\mathrm{Ind}^{K^1}_H E \simeq_K \mathrm{Ind}^K_{K^1}\Gamma(U^x,i^*\mathcal{E})$$
by the transitivity of induction.
\end{enumerate}
\end{proof}

Finally, there is the question of finite generation.

\begin{theorem}\label{thm:finitegeneration}
Suppose 
$$\codim(Z, V) \geq 2$$
Then the modules $\Gamma(\widetilde{U}^x,p^*i^*\mathcal{E}),\Gamma(U^x,i^*\mathcal{E})$, and $\Gamma(U,\mathcal{E})$ are finitely-generated for $R$.
\end{theorem}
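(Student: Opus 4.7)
The plan is to establish finite generation for $\Gamma(U,\mathcal{V})$ first and then deduce the cases of $\Gamma(U^x,i^*\mathcal{V})$ and $\Gamma(\tilde U^x, p^*i^*\mathcal{V})$ by routine reductions. Since the $R$-action on $\Gamma(U,\mathcal{V})$ factors through $R_{\mathrm{red}}$, we may replace $X$ by its reduction. Let $\nu:\tilde{X}\to X$ be the normalization. Because $U$ is a $K$-orbit, it is smooth and hence normal, so $\nu$ is an isomorphism over $U$ and we may regard $U$ as an open subscheme of $\tilde{X}$. Finiteness of $\nu$ preserves dimensions, so $\operatorname{codim}_{\tilde{X}}(\tilde X \setminus U) = \operatorname{codim}_X(X\setminus U) \geq 2$. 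The scheme $\tilde X = \bigsqcup_g \tilde X_g$ is a finite disjoint union of integral normal affine varieties, one for each irreducible component $\overline{gU^x}$ of $X$.

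On each component $\tilde X_g$, the restriction of $\mathcal{V}$ to $gU^x$ is locally free of finite rank, in particular reflexive. By the classical extension theorem for reflexive coherent sheaves across closed subsets of codimension $\geq 2$ in a normal Noetherian scheme (Scheja's theorem, or equivalently the consequence of Serre's $S_2$ criterion), the pushforward $\tilde\jmath_*(\mathcal{V}|_{gU^x})$ along the inclusion $\tilde\jmath: gU^x \hookrightarrow \tilde X_g$ is coherent on $\tilde X_g$. Summing over components, $\tilde\jmath_*\mathcal{V}$ is coherent on $\tilde X$. Since $\tilde X = \operatorname{Spec}\tilde R$ is affine (the normalization of a reduced finitely generated $\mathbb C$-algebra is again a finitely generated $\mathbb C$-algebra, by the Japanese property), we obtain
\[
\Gamma(U,\mathcal{V}) = \Gamma(\tilde X, \tilde\jmath_*\mathcal{V}),
\]
which is finitely generated over $\tilde R$. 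As $\tilde R$ is finite over $R$, finite generation over $R$ follows.

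For $\Gamma(U^x,i^*\mathcal{V})$: because $K$ has finitely many connected components, the orbit $U$ is a finite disjoint union $\bigsqcup_{g\in K/K^1} gU^x$ of open-and-closed (connected) components. The sheaf $\mathcal{V}$ splits accordingly as a direct sum, yielding an $R$-module decomposition $\Gamma(U,\mathcal{V}) = \bigoplus_g \Gamma(gU^x,\mathcal{V}|_{gU^x})$ in which $\Gamma(U^x,i^*\mathcal{V})$ appears as a direct summand. For $\Gamma(\tilde U^x, p^*i^*\mathcal{V})$: the covering $p:\tilde U^x \to U^x$ is finite étale with deck group $C = (K^x \cap K^0)/H^0$, so the projection formula gives
\[
\Gamma(\tilde U^x, p^*i^*\mathcal{V}) \;\cong\; \Gamma\bigl(U^x,\; i^*\mathcal{V} \otimes_{\mathcal{O}_{U^x}} p_*\mathcal{O}_{\tilde U^x}\bigr),
\]
and $i^*\mathcal{V}\otimes p_*\mathcal{O}_{\tilde U^x}$ is a $K^1$-equivariant vector bundle of finite rank on $U^x$ to which the previous step applies verbatim.

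The main technical input, and the expected obstacle, is the coherence of $\tilde\jmath_*\mathcal{V}$ on each normal component $\tilde X_g$: once this extension result is in hand, the remainder of the argument is bookkeeping with finite covers, connected components, and the finiteness of the normalization map. The crucial geometric ingredients are the normality of $U$ (forcing $\nu^{-1}(U) = U$) and the preservation of codimension under the finite morphism $\nu$.
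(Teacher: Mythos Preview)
Your argument is correct and takes a genuinely different route from the paper. The paper's proof is a one-line citation to Theorems~4.1 and~4.3 of Grosshans, \emph{Algebraic Homogeneous Spaces and Invariant Theory}, which establish finite generation of section modules over homogeneous spaces by invariant-theoretic methods (and, in that framework, lean on reductivity of the acting group).

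Your approach is more self-contained and in a sense more elementary: pass to the normalization, invoke the coherent extension of reflexive (here, locally free) sheaves across a codimension-$\geq 2$ closed subset of a normal Noetherian scheme, then push back down along the finite normalization map. The remaining two modules are handled by bookkeeping with connected components and the projection formula for the finite \'etale cover $p$. This avoids any appeal to invariant theory and uses nothing about $K$ beyond the smoothness of its orbit $U$. One small point worth tightening: the phrase ``to which the previous step applies verbatim'' is ambiguous, since $i^*\mathcal{V}\otimes p_*\mathcal{O}_{\tilde U^x}$ lives only on $U^x$ and is not $i^*$ of a bundle on $U$; what you mean (and what works) is that the normalization-plus-reflexive-extension argument from your first paragraph applies directly to the inclusion $U^x\hookrightarrow \overline{U^x}$, an irreducible component of $X$, with this new bundle. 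You might say so explicitly. (Also, your use of $\tilde X$ for the normalization of $X$ collides with the paper's notation, where $\tilde X$ denotes the normal closure of $\tilde U^x$.)
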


\begin{proof}
Combine Theorems 4.1 and 4.3 in \cite{Grosshans1997}.
\end{proof}

\bibliographystyle{plain}
\bibliography{bibliography.bib}

\end{document}